\documentclass[11pt,reqno]{amsart}

\usepackage{amsmath,amssymb,amsthm}
\usepackage[margin=1.1in]{geometry}
\usepackage{microtype}
\usepackage{tikz}
\usepackage[colorlinks=true,linkcolor=blue,citecolor=blue]{hyperref}

\theoremstyle{plain}
\newtheorem{theorem}{Theorem}[section]
\newtheorem{proposition}[theorem]{Proposition}
\newtheorem{lemma}[theorem]{Lemma}

\theoremstyle{definition}
\newtheorem{remark}[theorem]{Remark}
\newtheorem{example}[theorem]{Example}

\newcommand{\R}{\mathbb{R}}
\newcommand{\Z}{\mathbb{Z}}
\newcommand{\C}{\mathbb{C}}
\newcommand{\HH}{\mathbb{H}}
\newcommand{\SU}{\mathrm{SU}(2)}
\newcommand{\SO}{\mathrm{SO}(3)}
\newcommand{\Pin}{\mathrm{Pin}(2)}
\newcommand{\tr}{\operatorname{tr}}
\newcommand{\Inat}{I^{\natural}}
\newcommand{\gr}{\operatorname{gr}}
\newcommand{\Sig}{\Sigma}
\newcommand{\pill}{\mathcal{P}}
\newcommand{\bp}{\mathfrak{b}}

\begin{document}

\title[Traceless characters and instanton gradings for $2$-bridge and $(3,n)$ knots]
{Traceless $\SU$ characters and $\Z/4$ instanton gradings\\ for two-bridge and $(3,n)$-torus knots}

\author{Bernd J. Wuebben}
\subjclass[2020]{57R58 (primary); 57K10, 57K31, 53D40, 58J30 (secondary)}
\keywords{Instanton knot homology, traceless character varieties,
pillowcase, Atiyah--Floer conjecture, two-bridge knots, torus knots,
binary dihedral representations, spectral flow}
\date{First version: July 27, 2026. This version: August 15, 2026}

\begin{abstract}
For the $(3,n)$-torus knots we determine the $\Z/4$ gradings of the reduced singular instanton chain
complex through the double branched cover $\Sigma(2,3,n)$ rather than through an index computation:
each irreducible flat connection on the cover has two traceless knot lifts of equal grading, so
Daemi--Scaduto's theorem for the irreducible torus-knot complex transports to a grading split. For $n$
odd this recovers the chain-rank distribution $(1+a,a,a,a)$, $a=-\sigma/4$, conjectured by
Poudel--Saveliev and established for all torus knots by Daemi--Scaduto. Comparing that rank vector with
$\operatorname{rank}\Inat=\|\Delta\|_1$ then determines the total rank of the framed differential in
every residue class of $n$ modulo $6$: it vanishes for $n\equiv1,2$ and equals one for $n\equiv4,5$.
The comparison uses no index theory, which is what carries it into the even classes, where
$\Sigma(2,3,n)$ is not a homology sphere; $T(3,8)$ is a non-alternating knot with vanishing framed
differential. Along the way we prove that every irreducible traceless $\SU$ character of a two-bridge
knot is binary-dihedral, with the traceless Riley polynomial in closed form, and that for the
$(3,n)$-torus knots exactly $(\det-1)/2$ characters are dihedral, so for $n$ odd none is; and we
identify the first nonzero pillowcase differential, for $8_{19}=T(3,4)$, as a corner figure-eight
bigon absent on two-bridge knots.
\end{abstract}

\maketitle

\section{Introduction}

\subsection{The Atiyah--Floer conjecture and its knot version}
Let $Y$ be a closed oriented $3$-manifold with a Heegaard splitting $Y=Y_0\cup_\Sigma Y_1$. Two Floer
theories attach to this picture. On the gauge-theoretic side is Floer's instanton homology
$HF^{\mathrm{inst}}(Y)$, generated by flat $\SU$ connections and with a differential counting
anti-self-dual instantons on $Y\times\R$. On the symplectic side, the moduli space $M(\Sigma)$ of flat
$\SU$ connections on $\Sigma$ is symplectic, the handlebodies determine Lagrangians $L_0,L_1\subset
M(\Sigma)$, and one forms the Lagrangian Floer homology $HF^{\mathrm{symp}}(L_0,L_1)$. The Atiyah--Floer
conjecture~\cite{Atiyah} asserts these are isomorphic (for a recent survey of the Lagrangian Floer
theory in which the conjecture sits, see Auroux~\cite{Auroux}). The one general theorem is the
mapping-torus case
of Dostoglou--Salamon~\cite{DS}; the Heegaard-splitting statement remains open, obstructed by the
singularities of $M(\Sigma)$ and by figure-eight bubbling in the adiabatic limit.

There is a knot-theoretic analogue in which the pillowcase plays the role of $M(\Sigma)$. Let
$K\subset S^3$ be a knot, and cut $S^3$ along a Conway sphere meeting $K$ in four points into two
$2$-string tangles. A representation $\rho\colon\pi_1(S^3\setminus K)\to\SU$ is \emph{traceless} if every
meridian maps to a traceless element (conjugate to $\operatorname{diag}(i,-i)$); this is the boundary
condition of Kronheimer--Mrowka's singular instanton theory~\cite{KM1,KM2}. Restricting traceless flat
connections of each tangle to the four-punctured sphere and applying holonomy perturbations, one obtains
immersed Lagrangians $L_0,L_1$ in the \emph{pillowcase}
\[
\pill=(\R/2\pi\Z)^2/\iota,\qquad \iota(\gamma,\theta)=(-\gamma,-\theta),
\]
the traceless $\SU$ character variety of the four-punctured sphere: a two-sphere with four $\Z/2$
orbifold points~\cite{Lin,HHK1}. The \emph{knot Atiyah--Floer conjecture} of Hedden--Herald--Kirk and
Cazassus--Herald--Kirk--Kotelskiy~\cite{HHK1,CHKK} asserts that a bounding-cochain-deformed Lagrangian
Floer homology of $(L_0,L_1)$ is isomorphic, as a relatively $\Z/4$-graded group, to the reduced
singular instanton knot homology $\Inat(K)$. (A \emph{bounding cochain} is a formal combination of
self-intersection points of an immersed Lagrangian that deforms the Floer differential by
higher-polygon counts --- the standard device by which Lagrangian Floer theory absorbs disk bubbling
\cite{Auroux,CHKK}; \emph{figure-eight bubbling} is the degeneration of holomorphic strips, specific
to immersed Lagrangians, that makes the correction necessary here \cite{CHKK}.)

\subsection{Purpose and scope}
Two things here are new. The first is a determination of the $\Z/4$ grading distribution for the
$(3,n)$-torus knots that runs through the double branched cover rather than through an index
calculation: the two traceless knot lifts of a flat connection on $\Sig(2,3,n)$ carry the same
grading, so Daemi--Scaduto's theorem for the irreducible torus-knot complex transports to a
grading split, and hence to the rank vector of the framed complex. The second is that comparing that
rank vector with $\|\Delta\|_1$ determines the total rank of the framed differential in \emph{all
four} residue classes of $n$ modulo $6$, including the even classes, where the Fintushel--Stern index
is unavailable because $\Sig(2,3,n)$ is not a homology sphere. Both are collected in
Theorem~\ref{thm:C}.

Around them the paper makes explicit, for two tractable infinite families, the
representation-theoretic data on which the pillowcase construction rests: the traceless character
variety (the generators), the $\Z/4$ spectral-flow gradings, and the differential. The same data is
in active use beyond the pillowcase program: Dai--Mallick--Taniguchi have recently used an explicit
analysis of the traceless character varieties of two-bridge knots to compute, in part, symmetry
actions on their singular instanton complexes~\cite{DMT}.

Three explicit verifications are reported below: the exact-arithmetic check of Theorem~\ref{thm:A},
the counts of Proposition~\ref{prop:count}, and the gradings of Section~\ref{sec:grading}. They are carried
out by short computer programs, included as ancillary files with this preprint and also available at
the public repository \url{https://github.com/bwuebben/pillowcase-bounding-cochain}. Each re-derives
its output by an independent route; the first works in exact Gaussian-integer arithmetic.

\subsection{Results}\label{sec:intro-results}
Section~\ref{sec:pillow} records the pillowcase and the traceless characters as generators.
Section~\ref{sec:2bridge} proves:

\begin{theorem}[Traceless characters of two-bridge knots are dihedral]\label{thm:A}
Let $K=\bp(p,q)$ be the two-bridge knot of fraction $p/q$, so $p=\det K$. Every irreducible traceless $\SU$ representation of
$\pi_1(S^3\setminus K)$ is binary-dihedral. There are exactly $(p-1)/2$ of them: writing $\omega$ for
the meridian-pair angle (the angle between the axes of two chosen meridians, the conjugacy invariant
of Section~\ref{sec:pillow}), they occur at
$\cos\omega=\cos(2\pi k/p)$ for $k=1,\dots,(p-1)/2$, independent of $q$; equivalently the traceless
Riley polynomial in $u=2\cos\omega-2$ is
\[
\phi_p(u)=\prod_{k=1}^{(p-1)/2}\Bigl(u+4\sin^2\tfrac{\pi k}{p}\Bigr),
\]
monic of degree $(p-1)/2$, with integer coefficients, $\phi_p(0)=p=\det K$, and root-sum $-p$.
\end{theorem}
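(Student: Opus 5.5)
Writing $\pi_1(S^3\setminus K)=\langle x,y\mid wx=yw\rangle$ for the two-bridge presentation, with $w=y^{\epsilon_1}x^{\epsilon_2}\cdots y^{\epsilon_{p-1}}$ and $\epsilon_j=(-1)^{\lfloor jq/p\rfloor}$, the plan is to normalize a traceless irreducible representation by conjugation to
\[
\rho(x)=\mathbf{i},\qquad \rho(y)=\cos\omega\,\mathbf{i}+\sin\omega\,\mathbf{j},\qquad 0<\omega<\pi,
\]
with $\omega$ the meridian-pair angle of Section~\ref{sec:pillow}. The key observation is that traceless elements are unit pure quaternions, so $X^{-1}=-X$ for $X=\rho(x)$ and $Y=\rho(y)$. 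Hence $\rho(w)=\pm W$, where $W$ is a word in $X$ and $Y$ alone. Both $X$ and $Y$ lie in the circle $\{\cos t\,\mathbf{i}+\sin t\,\mathbf{j}\}$ of pure quaternions in the $\mathbf{i}\mathbf{j}$-plane. The group generated by $e^{\mathbf{k}s}$ together with that circle is exactly the binary-dihedral normalizer $N(S^1)$, where $S^1=\{\cos s+\mathbf{k}\sin s\}$. So every such $\rho$ automatically lands in a binary-dihedral group, which is the dihedrality claim once the solutions are shown to exist and be irreducible (irreducibility is automatic for $0<\omega<\pi$).

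To count the solutions, compute in that group. A product of two elements of the circle is $YX=-\cos\omega-\sin\omega\,\mathbf{k}$-type rotation by angle $\omega$ in $S^1$, and a product of an odd number of circle elements is again a circle element. Since $p$ is odd, $w$ has even length $p-1$, so $\rho(w)=\pm e^{\mathbf{k}\alpha}$ for some $\alpha$. The angle $\alpha$ is an integer combination $m\omega$ with $m$ determined by the signs $\epsilon_j$. The relation $wx=yw$ then reads $e^{\mathbf{k}\alpha}\mathbf{i}e^{-\mathbf{k}\alpha}=Y$, i.e. $2\alpha\equiv\omega \pmod{2\pi}$ up to sign. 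This is the standard dihedral computation, and it should reduce to the condition $p\,\omega\equiv 0\pmod{2\pi}$. It is easiest to identify it with the Fox-coloring fact that dihedral representations of $\pi_1$ onto $D_p$ correspond to the $(p-1)/2$ nontrivial colorings modulo symmetry; the relevant abelian cover is the double branched cover $L(p,q)$ with $H_1=\Z/p$. Thus $\omega=2\pi k/p$, $k=1,\dots,(p-1)/2$, each giving one conjugacy class (the sign ambiguity and the reflection $\omega\mapsto 2\pi-\omega$ are absorbed by normalization). This is independent of $q$.

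The main obstacle is the sign bookkeeping in $\rho(w)$, namely showing that the net rotation angle is $\omega$ times an integer $m$ with $2m\equiv\pm1\pmod p$, so that the solution set is exactly the primitive $p$-th roots. I would handle it by Fox calculus rather than tracking signs directly, using the identification above: solutions of $e^{2\mathbf{k}\alpha}=e^{\mathbf{k}\omega}$ correspond to homomorphisms $H_1(\Sigma_2(K))=\Z/p\to S^1$.

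The polynomial statements then follow formally. Since $u=2\cos\omega-2=-4\sin^2(\omega/2)$, the roots of $\phi_p$ are $u=-4\sin^2(\pi k/p)$, which gives the product formula. The coefficients are integers because the roots form a full Galois orbit, or union of orbits, of $2\cos(2\pi k/p)-2$, which are algebraic integers. Equivalently, $\phi_p(u)$ is obtained from $U_{p-1}$ and the Chebyshev polynomials. The identity $\prod_{k=1}^{(p-1)/2}4\sin^2(\pi k/p)=p$ is classical and gives $\phi_p(0)=p$. The root-sum is
\[
-\sum_k 4\sin^2\frac{\pi k}{p} \;=\; -\sum_k\Bigl(2-2\cos\frac{2\pi k}{p}\Bigr) \;=\; -(p-1)-1 \;=\; -p,
\]
using $\sum_{k=1}^{(p-1)/2}\cos(2\pi k/p)=-\tfrac12$.
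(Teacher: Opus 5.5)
Your argument is correct in substance and takes a genuinely different route from the paper. The paper never solves for a representation. It squeezes: Klassen's count of dihedral characters gives at least $(p-1)/2$ irreducible traceless characters, and the $u$-degree of $\Phi_{p,q}(i,u)$ gives at most $(p-1)/2$, so the two sets coincide and the roots can be read off. You instead observe that dihedrality is automatic. Two unit pure quaternions lie in the reflection coset of the normalizer of the circle about their common normal, so every traceless representation of a group generated by two meridians lands in a conjugate of $\Pin$. The count then comes from solving the single relation inside that group. Your route is self-contained: neither Klassen's count nor Riley's polynomial is needed for the classification, and the $q$-independence becomes transparent. The paper's squeeze, in exchange, identifies $\phi_p$ with the traceless Riley polynomial in the same stroke.

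Three points need repair.

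First, the ``main obstacle'' is not there, and your description of it is inaccurate. By your own observation $X^{-1}=-X$ and $Y^{-1}=-Y$, the alternating word gives $\rho(w)=\pm(XY)^{(p-1)/2}$ whatever the $\epsilon_j$. (For $wx=yw$, take $w$ to start with $x$ and end with $y$; your displayed $w$ starts and ends with $y$, which is impossible at even length $p-1$.) So $m$ is not determined by the signs: the $\epsilon_j$, the only place $q$ enters, contribute only a central sign. Since $X(XY)X^{-1}=(XY)^{-1}$ and $Y=-X(XY)$, the relation collapses to $(XY)^p=-1$, i.e.\ $p\omega\in 2\pi\Z$. No Fox calculus or $H_1(\Sigma_2)$ dictionary is needed.

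Second, the solutions are all nontrivial $p$th roots, not only the primitive ones. For $p=9$ the value $k=3$, $\cos\omega=-\tfrac12$, is required to reach $(p-1)/2$. Also, the congruence $2m\equiv\pm1\pmod p$ alone would admit extra solutions; what you need is $2m\mp1=\pm p$, which $m=(p-1)/2$ satisfies.

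Third, the ``equivalently'' clause does not follow formally from the $\SU$ classification. You must check that each $u_k$ is a root of $\Phi_{p,q}(i,\cdot)$, by conjugating the representation into Riley normal form with $s=i$. You then need Riley's bound $\deg_u\Phi_{p,q}\le(p-1)/2$ to exclude other roots and repeated roots; this is the same input the paper uses. Your computation runs verbatim in $\mathrm{SL}_2(\C)$, where traceless means $X^2=-1$, and shows that every root of $\Phi_{p,q}(i,\cdot)$ is some $u_k$. After that, only simplicity of the roots needs the degree bound.
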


We then explain (Section~\ref{sec:2bridge-pillow}) how Theorem~\ref{thm:A} clarifies the
Hedden--Herald--Kirk two-bridge isomorphism and localizes the obstruction, and we correct a naive
expectation about the earring correspondence.

Section~\ref{sec:3n} computes the $(3,n)$-torus knot characters and proves:

\begin{theorem}[Dihedral dichotomy for $(3,n)$-torus knots]\label{thm:B}
Let $n\ge2$ and $\gcd(3,n)=1$. The irreducible traceless $\SU$ characters of $T(3,n)$ are in bijection with the
traceless arcs of Section~\ref{sec:3n} (the arcs of irreducible characters that
contain a traceless point), one per arc. Exactly $(\det-1)/2$ of them are
binary-dihedral: one if $n$ is even $(\det=3)$ and none if $n$ is odd $(\det=1)$. In particular, for $n$
odd every irreducible traceless character of $T(3,n)$ is non-dihedral.
\end{theorem}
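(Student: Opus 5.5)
We will work with the standard presentation $\pi_1(S^3\setminus T(3,n))=\langle x,y\mid x^3=y^n\rangle$, where $x^3=y^n$ generates the center. For an irreducible $\rho$, Schur's lemma forces $\rho(x^3)=\rho(y^n)=\epsilon\in\{\pm1\}$. Since $\rho$ is irreducible, $\rho(x)$ and $\rho(y)$ are noncentral, so their conjugacy classes are fixed by eigenvalue angles $\alpha=\pi a/3$ and $\beta=\pi b/n$. The admissible pairs $(a,b)$ are those with $0<a<3$, $0<b<n$, $a\equiv b \pmod 2$ (the parity matching $\epsilon$). For each pair, the irreducible characters form an open arc parametrized by the angle between the axes of $\rho(x)$ and $\rho(y)$; these are the arcs of Section~\ref{sec:3n}.

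We take the meridian to be $\mu=y^{-k}x^{l}$ with $3k-nl=\pm1$ (the usual choice, up to sign conventions). Its trace is an affine function of the cosine of the axis angle, so it is strictly monotone along each arc. Hence each arc contains at most one traceless point, which gives the bijection with traceless arcs, one character per arc. The remaining step is to count which of these characters are binary-dihedral.

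\textbf{Dihedral count.} A binary-dihedral traceless representation factors through the dihedral quotient, so it is determined by a character of $H_1$ of the double branched cover. We will use the standard fact (as in Klassen's count, and consistent with Theorem~\ref{thm:A}) that the conjugacy classes of irreducible traceless binary-dihedral representations of any knot number $(|H_1(\Sigma_2(K))|-1)/2=(\det K-1)/2$. Here $\Sigma_2(T(3,n))=\Sigma(2,3,n)$, and $\det T(3,n)=|\Delta(-1)|$, which equals $3$ when $n$ is even and $1$ when $n$ is odd. We can check this from $\Delta=(t^{3n}-1)(t-1)/((t^3-1)(t^n-1))$ evaluated at $t=-1$, splitting on the parity of $n$ and handling the $0/0$ by taking the limit.

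Since distinct characters lie on distinct arcs, and dihedral characters are among the irreducible traceless ones, we get exactly $(\det-1)/2$ dihedral characters. For $n$ odd there are none. For $n$ even there is exactly one, and we will identify it concretely as the character on which $\rho(x)^2$, or a suitable element, becomes central, i.e. the one with image in the binary dihedral group. The final clause of the theorem then follows immediately.

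\textbf{Main obstacle.} The delicate step is the monotonicity claim: we must verify that the traceless condition cuts each arc in exactly one point. This means checking that $\tr\rho(\mu)=0$ is actually attained in the interior of the arc, which is the definition of a traceless arc, and that it is attained only once. We plan to write $\tr\rho(\mu)$ explicitly as $A+B\cos\theta$ with $B\neq0$, using the quaternion product formula $\tr(pq)=2\cos\phi\cos\psi-2\sin\phi\sin\psi\,\langle u,v\rangle$ for unit quaternions $p=e^{\phi u}$ and $q=e^{\psi v}$.
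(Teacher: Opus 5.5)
Your proof is correct. The bijection with traceless arcs is the paper's argument. The meridian trace is affine in the cosine of the axis angle. The slope you flag as the main obstacle is $2\sin(k\beta)\sin(l\alpha)$, which is nonzero because $3k-nl=\pm1$ forces $\gcd(k,n)=\gcd(l,3)=1$. The arcs are disjoint because $\tr\rho(x)$ and $\tr\rho(y)$ recover the pair of indices.

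The dihedral count is where you genuinely diverge. The paper does not appeal to Klassen's count here; it argues directly:
\begin{itemize}
\item $\tr\rho(x)=\pm1$ keeps $\rho(x)$ out of the reflection coset $jS^1$ of $\Pin$.
\item Irreducibility then forces $\rho(y)\in jS^1$, i.e.\ $\rho(y)$ is traceless and $\beta=\pi/2$, which is impossible for odd $n$.
\item Conversely, for even $n$ on the arc $\beta=\pi/2$, the B\'ezout relation makes the relevant exponent odd (your $k$). So the constant term of the meridian trace vanishes and the traceless point sits at $\cos\psi=0$. This places $\rho(y)$ in the reflection coset relative to the circle through $\rho(x)$.
\end{itemize}
Only afterwards is the answer matched with $(\det-1)/2$.

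Your route combines three facts: Klassen's general count $(\det K-1)/2$ of irreducible binary-dihedral classes (the same fact the paper uses for Theorem~\ref{thm:A}), the automatic tracelessness of such representations, and $\det T(3,n)\in\{1,3\}$. It is shorter and explains why the answer takes the form $(\det-1)/2$. However, it leans on an external theorem and does not locate the dihedral character. The paper's argument is elementary, reproves Klassen's count for this family, and pins the character down.

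On that last point, your tentative identification is off. $\rho(x)^2=e^{2\alpha\hat u}$ with $2\alpha\in\{2\pi/3,4\pi/3\}$ is never central; it is $\rho(y)$ that becomes traceless, with $\rho(y)^2=-1$. This does not affect your count, which needs no such identification.
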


Section~\ref{sec:grading} passes to the double branched cover $\Sig(2,3,n)$ and proves the main
result of the paper. Write $C_*(K)$ for the irreducible singular instanton complex and
$\widetilde C(K)=C_*(K)\oplus C_{*-1}(K)\oplus\Z_{(0)}$ for the framed complex of the $S$-complex
of~\cite[Def.~3.21]{DSEquiv}, whose homology is $\Inat(K)$; this is not the framed instanton homology
$I^{\#}$ of a closed three-manifold. Subscripts on $\Z$ denote gradings and superscripts denote
ranks. Besides the differential $d$ of $C_*$, the differential $\widetilde\partial$ of $\widetilde C$
has a map $v\colon C_*\to C_{*-2}$ and two maps $\delta_1\colon C_1\to\Z$, $\delta_2\colon\Z\to C_{-2}$
to and from the reducible summand. For torus knots $d$ vanishes, so all of the content of
Theorem~\ref{thm:C} lies in $v,\delta_1,\delta_2$.

\begin{theorem}[Gradings and the framed differential for $(3,n)$-torus knots]\label{thm:C}
Let $\gcd(3,n)=1$ and put $a=-\sigma(T(3,n))/4$. For $n$ odd, the irreducible flat $\SO$ connections
on $\Sig(2,3,n)$ split evenly between the $\Z/4$ gradings $\mu=1$ and $\mu=3$, and
\[
\widetilde C(T(3,n))=(1+a,\,a,\,a,\,a)
\]
as a rank vector in gradings $0,1,2,3$. For every $n$ coprime to $3$, the total rank of the
differential of $\widetilde C(T(3,n))$ is
\[
\operatorname{rank}\widetilde\partial=
\begin{cases}0,& n\equiv1,2\pmod 6,\\ 1,& n\equiv4,5\pmod 6.\end{cases}
\]
In particular $T(3,8)$ has vanishing framed differential although it is non-alternating.
\end{theorem}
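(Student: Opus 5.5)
The plan is to prove Theorem~\ref{thm:C} in two stages: first the grading split for $n$ odd, obtained through the double branched cover, and then the rank of $\widetilde\partial$, obtained for all $n$ by counting and without any index theory.

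\emph{Stage 1: the grading split.} I would start from the standard correspondence between irreducible traceless $\SU$ characters of $K=T(3,n)$ and irreducible flat $\SO$ connections on $\Sig(2,3,n)$. Under it, each irreducible flat connection on the cover has exactly two traceless knot lifts, exchanged by the covering involution (equivalently, by twisting by the nontrivial central character). Both lifts are nondegenerate, since the torus-knot complex has $d=0$ and the generators are already isolated (Theorem~\ref{thm:B} gives one character per traceless arc). The step to establish is that the two lifts carry the same $\Z/4$ grading in $C_*(K)$.

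I would prove this by comparing spectral flows. The involution acts on the path of connections used to compute the grading, and it preserves the path endpoints up to this symmetry. Hence the spectral flow along a path to one lift equals the spectral flow along the image path to the other lift, and the relative grading between the two lifts is $0$.

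With that in hand, Daemi--Scaduto's theorem for the irreducible torus-knot complex gives $C_*(K)$ as $\Z^{a}$ in two gradings. Transporting this through the two-to-one correspondence shows that the irreducible $\SO$ connections on $\Sig(2,3,n)$ split evenly between $\mu=1$ and $\mu=3$. The framed complex $\widetilde C=C_*\oplus C_{*-1}\oplus\Z_{(0)}$ then has the rank vector $(1+a,a,a,a)$: the shift by one in the second copy fills the other two residues, and the reducible summand adds $1$ in grading $0$.

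\emph{Stage 2: the rank of the differential.} Here I would use only ranks. Since $d=0$ and $\widetilde C$ is a complex with homology $\Inat(K)$,
\[
\operatorname{rank}\widetilde C=\operatorname{rank}\Inat+2\operatorname{rank}\widetilde\partial,
\qquad\text{so}\qquad
\operatorname{rank}\widetilde\partial=\tfrac12\bigl(\operatorname{rank}\widetilde C-\|\Delta\|_1\bigr).
\]
For $n$ odd, $\operatorname{rank}\widetilde C=4a+1$. For $n$ even, where $\Sig(2,3,n)$ is not a homology sphere and $a$ need not be an integer, I would instead write $\operatorname{rank}\widetilde C=1+2N$, with $N$ the number of irreducible generators. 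I would read off $N$ from the arc count of Proposition~\ref{prop:count} and Theorem~\ref{thm:B}, taking care that the single dihedral character is counted correctly.

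It then remains to compute $\sigma(T(3,n))$ and $\|\Delta_{T(3,n)}\|_1$ by residue of $n$ modulo $6$. I would use
\[
\Delta=\frac{(t^{3n}-1)(t-1)}{(t^3-1)(t^n-1)},
\]
whose coefficients all lie in $\{0,\pm1\}$, together with the Gordon--Litherland--Murasugi formula for the signature of $T(3,n)$. Writing $n=6m+r$ and simplifying should give the four cases. As a check, $T(3,5)$ gives $9-7=2$, hence rank $1$, and $T(3,7)$ gives $9-9=0$, hence rank $0$. The statement for $T(3,8)$ is the case $r=2$, and it is non-alternating since it has braid index $3$ and nontrivial crossing number.

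\emph{Main obstacle.} The delicate point is the even case. There I must make sure that the generator count $N$ coming from the pillowcase and arc analysis really equals the rank of $C_*$. This means checking nondegeneracy, and in particular that the dihedral character, whose lift to the cover is reducible-like since $H_1(\Sig)=\Z/3$, contributes exactly the expected generators. I would verify this against $T(3,4)$ and $T(3,8)$ before running the general residue computation.
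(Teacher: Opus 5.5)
\textbf{Main gap: the homology rank.} In Stage~2 you substitute $\|\Delta\|_1$ for $\operatorname{rank}\Inat(T(3,n))$ with no argument. That identity is the substantive input of the differential computation; in the paper it is Lemma~\ref{lem:homrank}. In general one only has $\operatorname{rank}\Inat(K)\ge\|\Delta_K\|_1$, because $\Inat\cong KHI$ over $\mathbb Q$ and $KHI$ has graded Euler characteristic $\Delta_K$~\cite{KMAlex}.

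That lower bound does finish $n\equiv1,2\pmod6$: there your chain rank already equals $\|\Delta\|_1$, and $\dim H\le\dim\widetilde C$. For $n\equiv4,5\pmod6$, however, the chain rank is $\|\Delta\|_1+2$. The lower bound together with parity still permits $\operatorname{rank}\Inat=\dim\widetilde C$, i.e.\ $\widetilde\partial=0$. So the nonvanishing half of the theorem needs an \emph{upper} bound on $\operatorname{rank}\Inat$, and your plan has nothing playing that role.

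The paper gets the upper bound from the reduced Khovanov-to-instanton spectral sequence~\cite{KM2}, together with an explicit computation $\operatorname{rank}\widetilde{Kh}(T(3,n))=\|\Delta\|_1$ from Sch\"utz's Bar--Natan decomposition for three-braids~\cite{Schutz}. Your ``main obstacle'' is therefore misplaced. The even-case chain rank $2N+1$ is supplied by~\cite[\S9.5]{DSEquiv} (one generator per irreducible traceless character of a torus knot). The homology rank is the step your plan never addresses.

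\textbf{Stage~1 and smaller points.} Stage~1 otherwise follows the paper's route: $C_*\cong\Z_{(1)}^a\oplus\Z_{(3)}^a$ from Daemi--Scaduto, plus equal gradings of the two lifts. Your symmetry argument for the equal gradings is a fair substitute for citing \cite[Lem.~6.4]{PS}, once you say why it works. The flip is twisting by the sign character, which leaves the adjoint connection, and hence every (extended) Hessian, unchanged. It also fixes $\Theta$ up to gauge. Three smaller corrections:
\begin{enumerate}
\item Nondegeneracy does not follow from $d=0$ (which is downstream of it) or from the characters being isolated. Cite it instead, e.g.\ \cite[\S7.4]{PS}.
\item You identify the grading $\mu$ of~\eqref{eq:mu} with the Daemi--Scaduto grading of the lifts without comparing normalizations. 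They differ by $\sigma\bmod4$, which is harmless only because $\sigma=-4a$ for $n$ odd.
\item Braid index $3$ does not imply non-alternating; the figure-eight knot is a counterexample. For $T(3,8)$, note instead that an alternating knot has alternating Alexander coefficients, so $\det=\|\Delta\|_1$, whereas here $\det=3$ and $\|\Delta\|_1=11$.
\end{enumerate}
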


Theorem~\ref{thm:C} is Propositions~\ref{prop:split} and~\ref{prop:even} below.
Section~\ref{sec:819} reproduces the $8_{19}$ differential and identifies its geometric source.
Section~\ref{sec:open} states the genuinely open geometric direction.

\subsection{Strategy}
The arguments are short and largely independent, and we sketch them here.

Theorem~\ref{thm:A} is a two-sided count. Irreducible binary-dihedral representations are
automatically traceless, which produces at least $(p-1)/2$ traceless characters; the traceless
specialization $s=i$ of Riley's polynomial has $u$-degree at most $(p-1)/2$, which produces at most
that many. The bounds coincide, so the two sets are equal and the roots can be read off.

Theorem~\ref{thm:B} turns on a trace obstruction. Writing $\pi_1(S^3\setminus T(3,n))=\langle x,y\mid
x^3=y^n\rangle$, the trace of $\rho(x)$ is $\pm1$, so $\rho(x)$ cannot lie in the reflection coset of
$\Pin$; irreducibility then forces $\rho(y)$ into that coset, where every element is traceless, and
that is possible only when $n$ is even. The converse is an explicit construction.

Theorem~\ref{thm:C} has two halves, proved by different means. For the grading split we do not
evaluate an index. Daemi--Scaduto compute the irreducible torus-knot complex: it is supported in
gradings $1$ and $3$ with $a$ generators in each and vanishing differential. The branched-cover
correspondence sends each irreducible flat connection on $\Sig(2,3,n)$ to a free pair of traceless
knot lifts, and the two lifts carry the same grading; so the $a$ cover connections must divide
$a/2$ and $a/2$ between $\mu=1$ and $\mu=3$, and framing turns that into the rank vector. One step
needs care: Daemi--Scaduto and Poudel--Saveliev normalize their gradings against different reference
points, and the two differ by $\sigma\bmod4$, which vanishes here because $\sigma=-4a$. Evaluating
the Fintushel--Stern index and the equivariant $\rho$-invariant directly then serves as an
independent check, for all odd $n\le43$, rather than as the proof.

For the differential we compare two independently computed numbers. For $n$ odd the chain rank is
$1+4a$, from the first half; for $n$ even it is instead $2N+1$, since the irreducible complex then has
one generator per traceless character and Proposition~\ref{prop:count} counts those. The homology rank is pinned to $\|\Delta_{T(3,n)}\|_1$ by squeezing $\Inat$ between
Kronheimer--Mrowka's Alexander lower bound and the reduced Khovanov upper bound, the latter computed
from Sch\"utz's Bar--Natan chain decomposition for three-braids. Since
$\dim H=\dim C-2\operatorname{rank}\partial$, the difference of the two numbers \emph{is} twice the
differential rank. Nothing in that comparison uses the index, which is why it survives into the even
classes, where $\Sig(2,3,n)$ has nontrivial reducibles and the index formula does not exist.

\subsection{Three kinds of statement}
We are careful throughout to separate:
\begin{itemize}
\item[(i)] results proved here (self-contained, though some are surely classical);
\item[(ii)] classical facts and standing theorems (attributed);
\item[(iii)] computations reproduced from~\cite{HHK2,PS,Anvari} (attributed, and independently checked
where possible).
\end{itemize}
Theorem~\ref{thm:A} belongs to (i) but is elementary and essentially classical, and we say so again
where it is proved. We claim no new theorem about the knot Atiyah--Floer conjecture itself: for
two-bridge knots that is a theorem of Hedden--Herald--Kirk~\cite{HHK1,HHK2}, and for the torus knots
we treat the instanton side and the character-variety side, not the analytic identification of the
two.

\subsection*{Acknowledgements}
The literature computations reproduced here are due to Hedden--Herald--Kirk~\cite{HHK1,HHK2},
Cazassus--Herald--Kirk--Kotelskiy~\cite{CHKK}, Poudel--Saveliev~\cite{PS}, and Anvari~\cite{Anvari};
we have tried to attribute each precisely.

\section{The pillowcase and traceless characters}\label{sec:pillow}

We work with $\SU$ as the unit quaternions. An element is traceless iff it is a purely imaginary unit
quaternion (eigenvalues $\pm i$, squaring to $-1$); as a rotation it is a rotation by $\pi$. The
traceless elements form a $2$-sphere $S\subset\operatorname{Im}\HH\cong\R^3$.

\begin{proposition}[Classical; cf.~\cite{Lin,HHK1}]\label{prop:pillow}
The traceless $\SU$ character variety of the four-punctured sphere is the pillowcase
$\pill=(\R/2\pi\Z)^2/\iota$, with four $\Z/2$ orbifold points at $(\gamma,\theta)\in\{0,\pi\}^2$ and the
symplectic form descending from $d\gamma\wedge d\theta$.
\end{proposition}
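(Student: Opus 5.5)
The approach is to parametrize the traceless representations directly and then quotient by conjugation. The fundamental group of the four-punctured sphere is free on meridians $a,b,c,d$ subject to the single relation $abcd=1$, or equivalently $ab=(cd)^{-1}$. A traceless representation therefore amounts to four points $A,B,C,D$ of the sphere $S$ of purely imaginary unit quaternions satisfying $ABCD=1$, or $AB=D^{-1}C^{-1}=DC$ (using $X^{-1}=-X$ on $S$). The character variety is this set modulo conjugation by $\SU$, which acts on $S$ through $\SO$.

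The first step normalizes by conjugation. Put $A=i$. Then rotate about the $i$-axis until $B$ lies in the $i$–$j$ great circle, so $B=\cos\gamma\, i+\sin\gamma\, j$ for some angle $\gamma$, and compute $AB=-\cos\gamma+\sin\gamma\, k=-e^{-\gamma k}$ (up to sign conventions). The relation $AB=DC$ asks $D$ and $C$ to have the same real part as $A$ and $B$, which forces the angle between $C$ and $D$ to equal the angle between $A$ and $B$. It also forces the common perpendicular axis of $C,D$ to coincide with the $k$-axis. So $C$ and $D$ lie in the $i$–$j$ plane and are determined by one further angle $\theta$, the rotation of the pair $(C,D)$ about $k$ relative to $(A,B)$; for instance $C=\cos\theta\, i+\sin\theta\, j$ and $D$ is fixed by $\gamma,\theta$.

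The second step identifies the residual symmetry. The map $(\gamma,\theta)\to$ representations is surjective from the torus $(\R/2\pi\Z)^2$. The stabilizer of the normal form is the conjugation by $j$-type elements (rotation by $\pi$ about an axis in the plane perpendicular to $i$), combined with the $k$-rotation. Checking this stabilizer shows that it acts exactly by $\iota(\gamma,\theta)=(-\gamma,-\theta)$. Degenerate cases, where $B=\pm A$ so that $\sin\gamma=0$ and the normalization of the plane fails, need separate care. There one checks directly that the identifications are still those given by $\iota$.

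The last step locates the orbifold points and the symplectic form. The fixed points of $\iota$ are $\{0,\pi\}^2$, and these are precisely the abelian (reducible) representations, where the isotropy jumps from $\pm1$ to a circle. The quotient is therefore a sphere with four cone points of angle $\pi$. For the symplectic form, the Atiyah–Bott–Goldman form restricted to fixed conjugacy classes of the boundary holonomies is computed in the coordinates $(\gamma,\theta)$, which are holonomy angles of a pair of dual curves, $ab$ and $bc$. The standard Goldman formula for twist flows gives $d\gamma\wedge d\theta$ up to a constant. I expect the main obstacle to be this symplectic identification, together with the bookkeeping in the degenerate normalization. Since the statement is marked classical, I would reduce the symplectic part to the cited computation in \cite{Lin,HHK1}.
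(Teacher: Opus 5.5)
Your proposal is correct and takes essentially the same route as the paper's sketch: normalize by conjugation, use $AB=DC$ to force all four images onto the great circle orthogonal to the axis of $AB$, parametrize by two angles, and identify the residual symmetry with $\iota$; you also treat the degenerate stratum $B=\pm A$ and sketch the symplectic form via the twist flow along $ab$, neither of which the paper's sketch addresses. Two harmless slips: the residual symmetry fixing $A=i$ and the $i$--$j$ plane is simply conjugation by $i$, the $\pi$-rotation about the $i$-axis (a $\pi$-rotation about an axis perpendicular to $i$ sends $i\mapsto -i$), and in your coordinates $\theta$ is the angle of $C$, so the $bc$-holonomy angle is $\theta-\gamma$ rather than $\theta$, which changes nothing since $d\gamma\wedge d(\theta-\gamma)=d\gamma\wedge d\theta$.
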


\begin{proof}[Sketch]
Write $\pi_1$ of the four-punctured sphere as
$\langle X_1,\dots,X_4\mid X_1X_2X_3X_4=1\rangle$ with each $X_i\in S$. Since purely imaginary units
satisfy $X_i^{-1}=-X_i$, the relation gives $X_1X_2=X_4X_3=:A$. For pure imaginary units,
$X_iX_j=-\langle X_i,X_j\rangle+X_i\times X_j$, so when $A\neq\pm1$ its axis is well defined; equating the
two expressions for $A$ forces the four $X_i$ to be coplanar, orthogonal to $\operatorname{axis}(A)$.
Placing that axis along $k$ and writing $X_m=\cos\beta_m\,i+\sin\beta_m\,j$, the relation becomes
$\beta_2-\beta_1=\beta_3-\beta_4$; gauge-fixing $\beta_1=0$ and setting $\gamma=\beta_2$,
$\theta=\beta_4$ leaves two parameters, and the residual conjugation by the $\pi$-rotation about $i$ acts
by $(\gamma,\theta)\mapsto(-\gamma,-\theta)$. The four fixed points are the reducible (collinear)
characters.
\end{proof}

\begin{figure}[ht]
\centering
\begin{tikzpicture}[scale=1.5]
  \draw[thick] (0,0) rectangle (2,2);
  \foreach \x/\y in {0/0,2/0,0/2,2/2}{\fill (\x,\y) circle (2.2pt);}
  \node[below left] at (0,0) {\footnotesize $(0,0)$};
  \node[below right] at (2,0) {\footnotesize $(\pi,0)$};
  \node[above left] at (0,2) {\footnotesize $(0,\pi)$};
  \node[above right] at (2,2) {\footnotesize $(\pi,\pi)$};
  \draw[->] (-0.15,0)--(2.35,0) node[right] {\footnotesize $\gamma$};
  \draw[->] (0,-0.15)--(0,2.35) node[above] {\footnotesize $\theta$};
  % two rational-slope Lagrangians of a 2-bridge knot
  \draw[blue,thick] (0,0)--(2,2);
  \draw[red,thick] (0,1)--(2,0.2);
  \node[blue] at (1.55,1.25) {\footnotesize $L_0$};
  \node[red] at (1.55,0.35) {\footnotesize $L_1$};
\end{tikzpicture}
\caption{The pillowcase $\pill=(\R/2\pi\Z)^2/\iota$, a $2$-sphere with four $\Z/2$ orbifold (corner)
points, shown on the square $[0,\pi]^2$. For a two-bridge knot the two tangle Lagrangians are straight
arcs of rational slope (Section~\ref{sec:2bridge-pillow}); being straight they meet transversally and
bound no bigon. The earring correspondence bends such an arc into a figure-eight at each corner
(Remark~\ref{rmk:earring}).}
\label{fig:pillow}
\end{figure}

For a knot $K$ with a two-tangle decomposition, a representation of $\pi_1(S^3\setminus K)$ is a
compatible pair of tangle representations, i.e.\ a point of $L_0\cap L_1$. Thus the intersection points
are exactly the traceless representations of the knot group, and (after perturbation) the generators of
the singular instanton chain complex~\cite{KM1,KM2}. For the two-bridge knots of Section~\ref{sec:2bridge}, an irreducible traceless representation is determined up to
conjugacy by the single invariant $\cos\omega=\langle a,b\rangle$, the cosine of the angle between the
axes of two chosen meridians $a,b\in S$.

\section{Two-bridge knots}\label{sec:2bridge}

\subsection{Traceless characters are dihedral}
Recall Riley's parametrization~\cite{Riley}: for a two-bridge knot the nonabelian $\mathrm{SL}_2(\C)$
representations with meridian eigenvalue $s$ are
\[
\rho(a)=\begin{pmatrix}s&1\\0&s^{-1}\end{pmatrix},\qquad
\rho(b)=\begin{pmatrix}s&0\\-u&s^{-1}\end{pmatrix},
\]
cut out by a polynomial $\Phi_{p,q}(s,u)$ of degree $(p-1)/2$ in $u$; the traceless locus is $s=i$.
On that locus $\tr\rho(ab)=2-u$, so Riley's parameter is $u=2-\tr\rho(ab)=2\cos\omega-2$ in the
meridian-pair angle $\omega$ of Section~\ref{sec:pillow}. Write $\phi_p$ for the monic normalization
of $\Phi_{p,q}(i,\cdot)$; Theorem~\ref{thm:A} shows it is independent of $q$.

\begin{proof}[Proof of Theorem~\ref{thm:A}]
A binary-dihedral representation sends meridians into the reflection coset $e^{i\phi}j$ of
$\Pin=S^1\cup jS^1$, whose elements are traceless; hence every irreducible dihedral representation is
traceless, so there are \emph{at least} $(p-1)/2$ irreducible traceless characters (the number of
irreducible metabelian representations of a knot with $|H_1(\Sigma_2)|=p$ odd). On the other hand the
traceless Riley polynomial $\Phi_{p,q}(i,u)$ has $u$-degree at most $(p-1)/2$, so there are \emph{at
most} $(p-1)/2$. The bounds coincide, so every irreducible traceless character is one of the dihedral
ones. A dihedral character with $\rho(ab)$ of rotation angle $4\pi k/p$ has
$\cos\omega=\cos(2\pi k/p)$; distinct $k\in\{1,\dots,(p-1)/2\}$ give distinct characters, and these
values depend only on $p$. Finally $\phi_p(u)=\prod_k(u-u_k)$ with
$u_k=2\cos(2\pi k/p)-2=-4\sin^2(\pi k/p)$; its constant term is
$\prod_k 4\sin^2(\pi k/p)=p$ and its root-sum is $\sum_k(2\cos(2\pi k/p)-2)=-1-(p-1)=-p$.
The Galois group of the $p$th cyclotomic field permutes the set of roots $\{u_k\}$, so the monic
product $\phi_p$ has rational coefficients; because its roots are algebraic integers, those coefficients
are algebraic integers as well, and hence lie in $\Z$.
\end{proof}

\begin{remark}
Small cases: $\phi_3=u+3$, $\phi_5=u^2+5u+5$, $\phi_7=u^3+7u^2+14u+7$, $\phi_9=u^4+9u^3+27u^2+30u+9$. For
$p$ prime, $\phi_p$ is, after $u=2c-2$, the minimal polynomial of $2\cos(2\pi/p)$, irreducible of degree
$(p-1)/2$. Theorem~\ref{thm:A} is elementary and is essentially classical (the traceless/metabelian
coincidence for two-bridge knots is implicit in~\cite{Riley}); we include the proof because it fixes the
interior generators exactly and $q$-independently, which is what the pillowcase picture needs. The
statement was verified by exact Gaussian-integer computation of the Riley word for all $p\le 9$,
including the two determinant-$9$ knots $9_1=\bp(9,1)$ and the hyperbolic $6_1=\bp(9,2)$, which produce
the identical root set $\{-4\sin^2(\pi k/9)\}$.
\end{remark}

\subsection{The pillowcase consequence and the earring}\label{sec:2bridge-pillow}
The character variety of a rational tangle of slope $r$ is a straight arc of slope $r$ in the pillowcase,
and the double branched cover of the four-punctured sphere is a torus $T^2$ on which the tangles lift to
straight lines~\cite{Lin,HHK1,CHKK}. For $\bp(p,q)$ the two lifted lines have classes $(0,1)$ and
$(p,q)$; they meet in $|\det\left(\begin{smallmatrix}0&1\\p&q\end{smallmatrix}\right)|=p$ points, and
gluing the two solid tori recovers $\Sigma_2(K)=L(p,q)$. Being straight lines, they bound no immersed
bigon.

Two things follow, and one warning. First, the naive count on $\pill$ itself is $(p+1)/2$: the $(p-1)/2$
interior dihedral characters of Theorem~\ref{thm:A} together with the single abelian character at a
corner. The passage to the full $\det=p$ requires the \emph{earring}: the reduction defining $\Inat$
adds to the knot a small linking circle carrying a fixed holonomy, which removes the reducibles and
replaces each tangle Lagrangian by its image under the earring correspondence
$E\subset\pill^-\times\pill$~\cite{KM1,CHKK,HK}.
Second, and structurally: the differential and the bounding-cochain deformation are assembled from
holomorphic bigons and discs, of which straight lines have none. Thus on the two-bridge family the
figure-eight bubbling that obstructs the general conjecture is inert.

This is consistent with the known outcome: Hedden--Herald--Kirk~\cite{HHK1,HHK2} prove that for every
two-bridge knot the $\Z/4$ gradings agree and the differentials vanish on both sides, so pillowcase
homology is isomorphic to $\Inat$ with zero bounding cochain, of rank $\det$. We emphasize this is
\emph{their} theorem; Theorem~\ref{thm:A} only clarifies why the interior generators are exactly the
dihedral characters and why the obstruction is corner-localized.

\begin{remark}[Correcting a naive expectation]\label{rmk:earring}
It is tempting to hope that composing the straight Lagrangians with the Cazassus--Herald--Kirk--Kotelskiy
earring correspondence $E\subset\pill^-\times\pill$~\cite{CHKK}, where $\pill^-$ is $\pill$ with the
sign of its symplectic form reversed, preserves straightness. It does not:
Herald--Kirk~\cite{HK} prove, at the regular-homotopy level, that $E$ sends an arc-type Lagrangian to a
genuine figure-eight curve with one self-crossing per corner, while doubling loop-type curves that avoid
the corners. Thus $E\circ L_1$ is immersed, and all of its new geometry is confined to the four corners.
For two-bridge knots the net effect is nonetheless trivial (zero bounding cochain suffices). The
rigorously established nonzero examples, the $(4,5)$-torus knot~\cite{CHKK} and the pretzel knot
$P(-2,3,5)$~\cite{Smith}, are both non-alternating; the companion paper~\cite{WuebbenII} reports additional
finite polygon experiments for which the full Maurer--Cartan equation has not been established.
\end{remark}

\section{The \texorpdfstring{$(3,n)$}{(3,n)}-torus knots}\label{sec:3n}

\subsection{The representation arcs}
Throughout this section $n\ge2$ and $\gcd(3,n)=1$. Write
$\pi_1(S^3\setminus T(3,n))=\langle x,y\mid x^3=y^n\rangle$. The element
$z=x^3=y^n$ is central, so any irreducible $\SU$ representation sends $z\mapsto\pm1$ and
\[
\rho(x)=e^{\alpha\hat u},\quad\rho(y)=e^{\beta\hat v},\qquad
\alpha=\tfrac{\pi\ell_1}{3},\ \ \beta=\tfrac{\pi\ell_2}{n},
\]
with $\hat u,\hat v$ unit imaginary quaternions, $\ell_1\in\{1,2\}$, $0<\ell_2<n$, and the
central-sign match $\ell_1\equiv\ell_2\pmod2$. The angle
$\psi=\angle(\hat u,\hat v)\in(0,\pi)$ is the only remaining modulus (we write $\psi$, not
$\theta$, to avoid collision with the pillowcase coordinate of Section~\ref{sec:pillow}); each such pair
$(\ell_1,\ell_2)$ is a one-parameter arc of irreducible characters (Klassen~\cite{Klassen}), and we
call these the \emph{representation arcs}. Call an arc a \emph{traceless arc} if it contains a traceless
character; the criterion, derived in the proof of Proposition~\ref{prop:count} below, is
$|\cot(b\beta)|<\sqrt3$.

\subsection{The traceless locus}
The meridian is $\mu=x^ay^b$ with $an+3b=1$; it is well defined modulo the center, and tracelessness is
insensitive to that ambiguity (multiplying $\mu$ by $z$ negates $\rho(\mu)$, preserving trace $0$). A
direct quaternion computation gives
\[
\tfrac12\tr\rho(\mu)=\operatorname{Re}\rho(\mu)
=\cos(a\alpha)\cos(b\beta)-\sin(a\alpha)\sin(b\beta)\cos\psi,
\]
which is affine in $\cos\psi$. Hence each arc contains exactly one traceless character, at
\begin{equation}\label{eq:tf}
\cos\psi=\cot(a\alpha)\,\cot(b\beta),
\end{equation}
provided the right side lies in $(-1,1)$ (otherwise the arc carries none; the boundary case never
occurs, as the proof below shows). We verified in each case that
the resulting $\rho$ satisfies $\rho(x)^3=\rho(y)^n=(-1)^{\ell_1}$ and $\tr\rho(\mu)=0$ exactly; both
central signs occur, and for $n$ odd the traceless characters split evenly between the two sheets
(see the proof of Proposition~\ref{prop:count}).

\begin{proposition}[Count]\label{prop:count}
The number of irreducible traceless $\SU$ characters of $T(3,n)$ is
\[
N(3,n)=\begin{cases}
2(n-1)/3,& n\equiv1\pmod 6,\\ (2n-1)/3,& n\equiv2\pmod 6,\\
(2n+1)/3,& n\equiv4\pmod 6,\\ 2(n+1)/3,& n\equiv5\pmod 6,
\end{cases}
\]
together with one abelian traceless character; thus $N(3,n)\sim 2n/3$.
\end{proposition}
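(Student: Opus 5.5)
The count reduces to a lattice-point count once I show that the traceless criterion~\eqref{eq:tf} depends only on $b\ell_2 \bmod n$. I take as given the parametrization above: irreducible characters form the arcs indexed by $(\ell_1,\ell_2)$ with $\ell_1\in\{1,2\}$, $0<\ell_2<n$ and $\ell_1\equiv\ell_2\pmod 2$, and each arc carries at most one traceless point, at $\cos\psi=\cot(a\alpha)\cot(b\beta)$. Since $\alpha,\beta\in(0,\pi)$ and $\hat u,\hat v$ are free unit vectors, a pair $(\rho(x),\rho(y))$ determines $(\alpha,\beta)$ uniquely, and $\psi$ is a conjugacy invariant. So distinct arcs, and distinct points on one arc, give distinct characters, and $N(3,n)$ is simply the number of arcs on which the right side of~\eqref{eq:tf} lies in $(-1,1)$.

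\textbf{Step 1: the factor $\cot(a\alpha)$ is constant in modulus.} Since $an+3b=1$, we have $an\equiv1\pmod 3$, so $3\nmid a$. As $\alpha=\pi\ell_1/3$ with $\ell_1\in\{1,2\}$, $a\alpha\equiv\pm\pi/3\pmod\pi$. Hence $|\cot(a\alpha)|=1/\sqrt3$ on every arc, and in particular $\sin(a\alpha)\neq0$. The criterion becomes $|\cot(b\beta)|<\sqrt3$, i.e.
\[
b\beta \bmod \pi \in (\pi/6,\,5\pi/6),
\]
which is the criterion announced in the text.

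\textbf{Step 2: reindex.} Since $b\beta=\pi b\ell_2/n$ and $\cot$ has period $\pi$, the condition depends only on $m:=b\ell_2 \bmod n$. It reads $n/6<m<5n/6$ with $m\in\{1,\dots,n-1\}$. Because $3b\equiv1\pmod n$, multiplication by $b$ permutes the nonzero residues mod $n$. Moreover $m\neq0$ ensures $\sin(b\beta)\neq0$, so the affine formula for $\operatorname{Re}\rho(\mu)$ genuinely determines $\cos\psi$. The parity constraint costs nothing: for each $\ell_2\in\{1,\dots,n-1\}$ exactly one $\ell_1\in\{1,2\}$ satisfies $\ell_1\equiv\ell_2\pmod2$. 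Therefore
\[
N(3,n)=\#\{m\in\Z:\ n/6<m<5n/6\}.
\]
The boundary values $m=n/6$ and $m=5n/6$ are never integers because $6\nmid n$. This gives the promised absence of the borderline case $|\cos\psi|=1$, and so
\[
N(3,n)=\lfloor 5n/6\rfloor-\lfloor n/6\rfloor.
\]

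\textbf{Step 3: evaluate by residue class.} Write $n=6k+r$ with $r\in\{1,2,4,5\}$.
\begin{itemize}
\item $r=1$: the count is $(5k)-k=4k=2(n-1)/3$.
\item $r=2$: the count is $(5k+1)-k=4k+1=(2n-1)/3$.
\item $r=4$: the count is $(5k+3)-k=4k+3=(2n+1)/3$.
\item $r=5$: the count is $(5k+4)-k=4k+4=2(n+1)/3$.
\end{itemize}
These match the four cases of the statement.

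\textbf{Step 4: the abelian character.} The abelianization is $\Z$, generated by the meridian. A traceless abelian representation sends $\mu$ to a point of $S$, and all such points are conjugate. So there is exactly one abelian traceless character. The asymptotic $N\sim 2n/3$ is then immediate.

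The step I expect to need the most care is the bookkeeping in Step 2. The meridian word $x^ay^b$ is defined only up to the choice of $(a,b)$ and up to the center, and I must confirm that these choices change neither the arcs nor the traceless condition. Changing $(a,b)$ to $(a+3t,\,b-nt)$ multiplies $\mu$ by $x^{3t}y^{-nt}=z^{t}z^{-t}=1$, so $\mu$ itself is unchanged. Multiplying $\mu$ by a power of $z$ only negates $\rho(\mu)$, which preserves trace $0$. I would also sanity-check the formula against $n=2$ (the trefoil, $N=1$, matching $\det=3$ via Theorem~\ref{thm:A}) and against $n=4,5,7$.
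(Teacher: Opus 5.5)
Your proposal is correct and follows essentially the same route as the paper. Both arguments use the constant modulus $|\cot(a\alpha)|=1/\sqrt3$, reindex by $m=b\ell_2 \bmod n$ (a bijection of nonzero residues, with the parity constraint fixing $\ell_1$), count $\lfloor 5n/6\rfloor-\lfloor n/6\rfloor$ with no boundary cases since $6\nmid n$, and separate arcs by $\tr\rho(x),\tr\rho(y)$. Your checks that $\sin(a\alpha)\sin(b\beta)\neq0$ and that changing the B\'ezout pair leaves $x^ay^b$ unchanged are correct small additions that the paper leaves implicit.
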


\begin{proof}
From $an+3b=1$ we get $\gcd(a,3)=\gcd(b,n)=1$. Since $\ell_1\in\{1,2\}$ and $3\nmid a\ell_1$, the
first cotangent in~\eqref{eq:tf} always has $|\cot(a\alpha)|=\cot(\pi/3)=1/\sqrt3$; so an arc is
a traceless arc iff $|\cot(b\beta)|<\sqrt3$, iff $\pi b\ell_2/n$ lies in $(\pi/6,5\pi/6)$ modulo $\pi$,
iff the residue $m=b\ell_2\bmod n$ lies in the open interval $(n/6,\,5n/6)$. Equality never occurs:
it would force $6\mid n$, impossible with $\gcd(3,n)=1$, so the boundary case $\cos\psi=\pm1$ does
not arise. The parity constraint assigns to each $\ell_2\in\{1,\dots,n-1\}$ exactly one $\ell_1$
($\ell_1=1$ for $\ell_2$ odd, $\ell_1=2$ for $\ell_2$ even), and $\ell_2\mapsto m=b\ell_2\bmod n$ is
a bijection of $\{1,\dots,n-1\}$; hence
\[
N(3,n)=\#\{m\in\Z:\ n/6<m<5n/6\}=\lfloor 5n/6\rfloor-\lfloor n/6\rfloor,
\]
which evaluates, in the four residues $n\equiv1,2,4,5\pmod6$, to the stated cases. Distinct arcs
give distinct characters, since $\tr\rho(y)=2\cos(\pi\ell_2/n)$ separates them, and the abelian
traceless character (every meridian to a fixed traceless element) is unique up to conjugacy.
Finally, for $n$ odd the involution $\ell_2\mapsto n-\ell_2$ flips the parity of $\ell_2$ (hence
exchanges the sheets $\ell_1=1\leftrightarrow2$) and preserves admissibility, since $m\mapsto n-m$
preserves the interval; being fixed-point-free, it splits the $N(3,n)$ traceless characters evenly
between the two central-sign sheets, as claimed above.
\end{proof}

The values $N(3,n)$ for $n=2,4,5,7,8,10,11,13$ are $1,3,4,4,5,7,8,8$. As a check, the formula was
also verified by direct enumeration of the traceless arcs for all $n\le 25$.

\subsection{The dihedral dichotomy}
\begin{proof}[Proof of Theorem~\ref{thm:B}]
Conjugate a binary-dihedral image into $\Pin=S^1\cup jS^1$. Since
$\tr\rho(x)=2\cos(\pi\ell_1/3)=\pm1$, the element $\rho(x)$ cannot lie in the reflection coset $jS^1$;
it therefore lies in $S^1$. Irreducibility forces $\rho(y)$ to lie in $jS^1$, whose elements are
traceless. Thus $\beta=\pi/2$, equivalently $\ell_2=n/2$, so $n$ must be even.

Conversely, suppose $n$ is even and take $\ell_2=n/2$. The parity constraint determines a unique
$\ell_1\in\{1,2\}$. In the B\'ezout relation $an+3b=1$, the integer $b$ is odd, and hence
$\cot(b\beta)=\cot(b\pi/2)=0$. Equation~\eqref{eq:tf} gives $\cos\psi=0$. Relative to the circle
$S^1$ containing $\rho(x)$, this places $\rho(y)$ in $jS^1$; the resulting traceless character is
irreducible and binary-dihedral. Hence there is exactly one such character for even $n$ and none for
odd $n$. This equals $(\det T(3,n)-1)/2$, since $\det T(3,n)=3$ for even $n$ and $1$ for odd $n$.
\end{proof}

Theorem~\ref{thm:B} is the exact opposite of Theorem~\ref{thm:A}: two-bridge knots are entirely
dihedral, the $(3,n)$-torus knots with $n$ odd entirely non-dihedral. The non-dihedral characters are
precisely those the pillowcase construction represents by non-straight Lagrangians, on which the earring
produces genuine figure-eight curves; this is why the family is a natural first test beyond the
metabelian world.

\begin{remark}
There is a visible symmetry: $(\ell_1,\ell_2)\mapsto(3-\ell_1,\,n-\ell_2)$ preserves $\psi$
in~\eqref{eq:tf} (both cotangents change sign), and it respects the parity constraint
$\ell_1\equiv\ell_2\pmod2$ exactly when $n$ is odd; in that case it pairs the two central-sign
sheets $\rho(x)^3=\pm1$ at equal meridian angle. This is the even split in the proof of
Proposition~\ref{prop:count}.
\end{remark}

\section{The \texorpdfstring{$\Z/4$}{Z/4} gradings via the double branched cover}\label{sec:grading}

The double branched cover of $T(3,n)$ is the Brieskorn sphere $\Sig(2,3,n)$. For odd $n$ it is an
integral homology sphere; its irreducible flat $\SO$ connections have rotation numbers
$(m_1,m_2,m_3)=(1,m_2,m_3)$ indexing the three singular fibers, with $m_2,m_3$ even, $0<m_2<3$,
$0<m_3<n$. Since $2$ is the only even value in $(0,3)$, necessarily $m_2=2$, and $m_3$ ranges over the
even integers in $(0,n)$ meeting the spherical-triangle inequalities on the angles
$(\pi/2,2\pi/3,\pi m_3/n)$. (These $m_i$ index the branched cover and are not the $(\ell_1,\ell_2)$ of
Section~\ref{sec:3n}.) Fintushel--Stern's count gives $a=-\sigma(T(3,n))/4$ such connections
\cite{FS}. They are equivariant, and each has two traceless knot lifts
\cite[\S7.4]{PS}; equivalently, these are the two-element fibers in the branched-cover correspondence
\cite[\S9.1]{DSEquiv}. Consequently
\begin{equation}\label{eq:N2a}
N(3,n)=2a\qquad(n\text{ odd}),\qquad \sigma(T(3,n))=-2N(3,n),
\end{equation}
an identity independently checked by the triangle-inequality count for every odd $n\le43$. Thus the
character count of Proposition~\ref{prop:count} determines the signature and, with the theorem below,
the graded chain group. Throughout this section a relatively $\Z/4$-graded group is recorded by its
rank vector $(r_0,r_1,r_2,r_3)$, the ranks in gradings $0,1,2,3$; unqualified homology ranks mean free
ranks, equivalently dimensions after tensoring with $\mathbb Q$.

\subsection{The grading formula}
The $\Z/4$ grading is the spectral flow of the extended Hessian mod $4$, relative to the trivial
connection $\Theta$ (a single generator, in grading $\sigma\bmod4$); every irreducible contributes four
generators, two in grading $\mu$ and two in $\mu+1$~\cite{KM2,PS}. The knot grading is
\begin{equation}\label{eq:mu}
\mu(\alpha)=\tfrac12\gr(\alpha)+\tfrac14\bigl(1-\rho_{\mathrm{Ad}\,\alpha}\bigr)\pmod 4,
\end{equation}
where, for $\Sig(2,p,q)$ with $p=3$, $q=n$, the Fintushel--Stern index~\cite{FS,Anvari} and the
equivariant $\rho$-invariant~\cite{Anvari} are
\begin{align}
\gr(\alpha)&=\frac{e^2}{pq}+\frac2p\sum_{k=1}^{p-1}\cot\tfrac{\pi k}{p}\cot\tfrac{\pi b_2 k}{p}
\sin^2\tfrac{\pi k m_2}{p}+\frac2q\sum_{k=1}^{q-1}\cot\tfrac{\pi k}{q}\cot\tfrac{\pi b_3 k}{q}
\sin^2\tfrac{\pi k m_3}{q},\label{eq:FS}\\
\rho_{\mathrm{Ad}\,\alpha}&=1-\frac{2}{pq}-\frac4p\sum_{k=1}^{p-1}\cot\tfrac{\pi k}{p}\tan\tfrac{\pi b_2 k}{p}
\cos^2\tfrac{\pi k m_2}{p}-\frac4q\sum_{k=1}^{q-1}\cot\tfrac{\pi k}{q}\tan\tfrac{\pi b_3 k}{q}
\cos^2\tfrac{\pi k m_3}{q},\label{eq:rho}
\end{align}
with $e=pq\,m_1+2q\,m_2+2p\,m_3$ and $b_2,b_3$ the even Seifert invariants
($pqb_1+2qb_2+2pb_3=1$); since the cotangents see them only mod $p,q$, one may take
$b_2\equiv(2q)^{-1}$, $b_3\equiv(2p)^{-1}$.

\begin{example}\label{ex:237}
For $\Sig(2,3,7)$ the two connections $(1,2,2)$ and $(1,2,4)$ give
$\gr=175\equiv7$ and $\gr=251\equiv3\pmod8$, $\rho=3$ in both cases, and
$\mu=87\equiv3$ and $\mu=125\equiv1\pmod4$. This agrees with Anvari's Example~6.1~\cite{Anvari} at the
level of the unreduced values, confirming our evaluation of \eqref{eq:FS}--\eqref{eq:rho}.
\end{example}

\subsection{The homology ranks}
The rank of $\Inat$ is pinned independently of any grading calculation, by a two-sided bound. This
is used for both parities below.

\begin{lemma}[Homology ranks]\label{lem:homrank}
For every $n\ge2$ coprime to $3$,
\[
\operatorname{rank}_{\mathbb Q}\Inat(T(3,n);\mathbb Q)=\|\Delta_{T(3,n)}\|_1=
\begin{cases}4k+1,&n=3k+1,\\ 4k+3,&n=3k+2.\end{cases}
\]
\end{lemma}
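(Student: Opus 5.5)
The plan is to squeeze $\operatorname{rank}\Inat$ between two quantities that can both be computed in closed form, as the Strategy section announces. For the lower bound I will invoke Kronheimer--Mrowka's theorem that the graded Euler characteristic of $\Inat$ recovers the Alexander polynomial. This gives $\operatorname{rank}\Inat(K)\ge\|\Delta_K\|_1$ for any knot, because the sum of the absolute values of the coefficients of a Laurent polynomial bounds the rank of any bigraded group whose graded Euler characteristic equals it. For the upper bound I will use the Kronheimer--Mrowka spectral sequence from reduced Khovanov homology to $\Inat$ (with $\mathbb Q$ coefficients, after the usual mirror convention), which gives $\operatorname{rank}\Inat(K)\le\operatorname{rank}\widetilde{Kh}(K;\mathbb Q)$. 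So it suffices to show that for $T(3,n)$ both ends equal the stated number.

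For the Alexander side I will compute directly. Since $\gcd(3,n)=1$,
\[
\Delta_{T(3,n)}(t)=\frac{(t^{3n}-1)(t-1)}{(t^3-1)(t^n-1)}.
\]
This is a product of cyclotomic factors whose coefficients are all $\pm1$ or $0$; it is the classical alternating pattern for torus knots, in which the nonzero coefficients alternate in sign. Writing $n=3k+1$ or $n=3k+2$ and expanding $\frac{1-t^{3n}}{1-t^n}\cdot\frac{1-t}{1-t^3}=(1+t^n+t^{2n})(1-t+t^3-t^4+\cdots)$ truncated appropriately, I will count nonzero terms. This should give $4k+1$ and $4k+3$ respectively. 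I will check the count against $T(3,4)$ ($k=1$: $t^{-3}-t^{-2}+1-t^2+t^3$, five terms) and $T(3,5)$ ($k=1$: seven terms).

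For the Khovanov side, which I expect to be the main obstacle, I will use Sch\"utz's Bar--Natan chain decomposition for closed three-braids. The braid for $T(3,n)$ is $(\sigma_1\sigma_2)^n$, and Sch\"utz's simplification yields explicit reduced complexes. I will extract from it (or from Turner's computation of $Kh(T(3,n))$) that over $\mathbb Q$ the reduced Khovanov homology has rank $4k+1$, respectively $4k+3$. The content of this step is that $\widetilde{Kh}(T(3,n);\mathbb Q)$ is ``thin-like'' in total rank: no extra cancelling pairs survive beyond $\|\Delta\|_1$, even though $T(3,n)$ is not alternating for $n\ge4$.

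If the explicit rank reading is delicate, my fallback is induction on $n$. I would use the skein exact triangle relating $T(3,n)$, $T(3,n-1)$ and a two-component resolution, together with the fact that the $\delta$-grading support is confined to two adjacent diagonals. Either way, equality of the two bounds forces $\operatorname{rank}\Inat=\|\Delta\|_1$, and the residue computation finishes the proof.
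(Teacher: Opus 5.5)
Your main line is the paper's proof. You squeeze $\operatorname{rank}\Inat$ between $\|\Delta\|_1$ and $\operatorname{rank}\widetilde{Kh}$, count the coefficients of $(1+t^n+t^{2n})/(1+t+t^2)$ by hand, and read the Khovanov rank off Sch\"utz's decomposition $C_{BN}\simeq A\oplus A(1)^k\oplus A(2)^k$ for $n=3k+1$ (with $A(1)^{k+1}$ for $n=3k+2$). The lower bound goes through the Alexander-graded $KHI$, whose Euler characteristic is $\Delta_K$, together with $\Inat\cong KHI$, since $\Inat$ itself carries no Alexander grading; the upper bound goes through the Kronheimer--Mrowka spectral sequence. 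In Sch\"utz's decomposition, setting $X=h=0$ kills every $(2X-h)^j$ and leaves $4k+1$, resp.\ $4k+3$, generators.

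Keep to that route, because your two alternatives do not work as stated. The premise of your fallback is false: $\widetilde{Kh}(T(3,n);\mathbb Q)$ occupies at least three $\delta$-diagonals from $T(3,7)$ on. Turner's computation is of unreduced $Kh$, whose rational rank is not twice the reduced one here ($8$ versus $5$ for $T(3,4)$), so it does not give the reduced rank directly.
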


\begin{proof}
Sch\"utz works over the Bar--Natan ring $\Z[X,h]/(X^2-Xh)$ and writes $A$
for its $q$-shifted rank-one free module and $A(j)$ for the two-term complex with differential
$(2X-h)^j$ \cite[Def.~3.2]{Schutz}; the reduced Khovanov complex is obtained by tensoring the
Bar--Natan complex with $\Z$, where $X$ and $h$ act by zero. His chain decompositions give, with
grading shifts suppressed,
\begin{align*}
C_{BN}(T(3,3k+1))&\simeq A\oplus A(1)^k\oplus A(2)^k,\\
C_{BN}(T(3,3k+2))&\simeq A\oplus A(1)^{k+1}\oplus A(2)^k
\end{align*}
\cite[\S3 and Thm.~5.3(2),(4)]{Schutz}. Each $A(j)$ is a two-term complex with differential
$(2X-h)^j$, so after reduced specialization it is a zero-differential complex on two generators.
Consequently
\begin{equation}\label{eq:kh-ranks}
\operatorname{rank}_{\mathbb Q}\widetilde{Kh}(T(3,n);\mathbb Q)=
\begin{cases}
4k+1,&n=3k+1,\\
4k+3,&n=3k+2.
\end{cases}
\end{equation}

Up to multiplication by a monomial, the Alexander polynomial is
\[
\Delta_{T(3,n)}(t)=\frac{1+t^n+t^{2n}}{1+t+t^2}.
\]
Below degree $n$ its coefficients repeat $1,-1,0$, and the quotient is reciprocal of degree $2n-2$.
The central coefficient, in degree $n-1$, has absolute value one. If $n=3k+1$, the strictly lower half
has $2k$ nonzero coefficients; if $n=3k+2$, it has $2k+1$. Reflection across the center therefore gives
coefficient norm $4k+1$ and $4k+3$, respectively, exactly the right side of~\eqref{eq:kh-ranks}.
Since $KHI(K)$ is Alexander-graded with graded Euler characteristic $\Delta_K(t)$, one has
$\operatorname{rank}KHI(K)\ge\|\Delta_K\|_1$; the reduced
Khovanov-to-instanton spectral sequence gives the upper bound (the spectral sequence starts from the
mirror, which has the same rational Khovanov rank); together with
$\Inat(K;\mathbb Q)\cong KHI(K;\mathbb Q)$, this yields
\[
\|\Delta_{T(3,n)}\|_1\le \operatorname{rank}_{\mathbb Q}\Inat(T(3,n);\mathbb Q)
\le \operatorname{rank}_{\mathbb Q}\widetilde{Kh}(T(3,n);\mathbb Q)=\|\Delta_{T(3,n)}\|_1
\]
\cite[Thm.~1.1]{KMAlex}\cite[Props.~1.2 and 1.4]{KM2}. Hence equality holds throughout.
\end{proof}

\subsection{The grading split for \texorpdfstring{$n$}{n} odd}
The family-wide grading distribution follows from Daemi--Scaduto's irreducible torus-knot theorem.
Direct evaluation of~\eqref{eq:FS}--\eqref{eq:rho} supplies an independent finite check.

\begin{proposition}\label{prop:split}
Let $n$ be odd and coprime to $3$, and set $a=-\sigma(T(3,n))/4=N(3,n)/2$ (an even integer). Under the
grading~\eqref{eq:mu}, the irreducible connections of $\Sig(2,3,n)$ split evenly: $a/2$ have $\mu=1$
and $a/2$ have $\mu=3$ (none has $\mu=0$ or $2$). Consequently the framed complex $\widetilde C$ has rank vector
\[
\widetilde C(T(3,n))=(1+a,\,a,\,a,\,a),\qquad \chi=+1,
\]
with the trivial generator $\Theta$ in grading $0$. If $\Delta_{T(3,n)}(t)=\sum_j c_jt^j$ and
$\|\Delta_{T(3,n)}\|_1=\sum_j|c_j|$, then the \emph{homology} $\Inat$ satisfies
\[
\operatorname{rank}\Inat(T(3,n))=\|\Delta_{T(3,n)}\|_1=
\begin{cases}1+4a, & n\equiv1\pmod 6\quad(\widetilde\partial=0),\\[2pt]
4a-1=(1+4a)-2, & n\equiv5\pmod 6\quad(\operatorname{rank}\widetilde\partial=1),\end{cases}
\]
where $\widetilde\partial$ is the differential of $\widetilde C$ and its rank means total rational
rank. Thus $\widetilde\partial$ vanishes for $n\equiv1$ and has total rank one for $n\equiv5$.
\end{proposition}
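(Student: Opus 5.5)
The plan follows the strategy announced in the introduction: transport Daemi--Scaduto's description of the irreducible torus-knot complex through the branched-cover correspondence, then read off the differential rank by comparing chain rank with the homology rank of Lemma~\ref{lem:homrank}.

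\emph{Step 1: the grading split.} Invoke Daemi--Scaduto's theorem for torus knots. The irreducible complex $C_*(T(3,n))$ has vanishing differential and is supported in gradings $1$ and $3$, with $a$ generators in each. Its generators are the $N(3,n)=2a$ irreducible traceless characters of Proposition~\ref{prop:count}, by \eqref{eq:N2a}. By \cite[\S7.4]{PS} and \cite[\S9.1]{DSEquiv}, these characters come in free pairs lying over the $a$ irreducible flat connections on $\Sig(2,3,n)$, and the two lifts of a pair carry the same grading $\mu$. If $c_1$ cover connections have $\mu=1$ and $c_3$ have $\mu=3$, then $2c_1=a=2c_3$ and no connection has $\mu\in\{0,2\}$. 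Hence $c_1=c_3=a/2$, which also shows that $a$ is even.

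\emph{Step 2: reconcile the normalizations.} This is the step I expect to be the main obstacle. Daemi--Scaduto fix the grading of the reducible generator by one convention, while Poudel--Saveliev and \eqref{eq:mu} place $\Theta$ in grading $\sigma\bmod4$. I would check that the two conventions differ by a shift of $\sigma\bmod4$. Since $\sigma=-4a\equiv0$, the shift is trivial, and $\Theta$ sits in grading $0$ in both. As a sanity check, Example~\ref{ex:237} ($a=1$ for $T(3,7)$ at the cover level, with $\mu=3,1$) should agree with this after the shift. The cited index computations for odd $n\le43$ serve only as corroboration.

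\emph{Step 3: the framed rank vector.} With $C_*=(0,a,0,a)$ in gradings $0,1,2,3$, the shifted copy $C_{*-1}$ contributes $(a,0,a,0)$, and $\Z_{(0)}$ contributes $(1,0,0,0)$. Summing gives $\widetilde C=(1+a,a,a,a)$, and the alternating sum gives $\chi=1$. The total chain rank is $1+4a$.

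\emph{Step 4: the homology comparison.} Over $\mathbb Q$ one has $\dim H=\dim\widetilde C-2\operatorname{rank}\widetilde\partial$, and Lemma~\ref{lem:homrank} gives $\dim H=\|\Delta\|_1$. It remains to express $a$ in terms of $n$ using $a=N(3,n)/2$ and Proposition~\ref{prop:count}.
\begin{itemize}
\item For $n=6m+1$: $N=4m$, so $a=2m$ and $1+4a=8m+1$. Lemma~\ref{lem:homrank} with $n=3(2m)+1$ gives $\|\Delta\|_1=4(2m)+1=8m+1$. Hence $\operatorname{rank}\widetilde\partial=0$.
\item For $n=6m+5$: $N=4m+4$, so $a=2m+2$ and $1+4a=8m+9$. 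Lemma~\ref{lem:homrank} with $n=3(2m+1)+2$ gives $\|\Delta\|_1=4(2m+1)+3=8m+7=4a-1$. Hence $\operatorname{rank}\widetilde\partial=1$.
\end{itemize}
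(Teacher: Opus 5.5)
Your proposal is correct and follows the paper's proof essentially step for step. It uses Daemi--Scaduto's torus-knot complex $C_*\cong\Z_{(1)}^a\oplus\Z_{(3)}^a$, the equal-grading free pairs of lifts over each cover connection, the $\sigma\bmod4$ normalization shift (trivial since $\sigma=-4a$, and logically needed before your count in Step~1), the framed sum $C_*\oplus C_{*-1}\oplus\Z_{(0)}$, and the comparison $\dim H=\dim\widetilde C-2\operatorname{rank}\widetilde\partial$ against Lemma~\ref{lem:homrank}, with correct residue-class arithmetic; the only slip is in your sanity check, where $T(3,7)$ has $a=2$, not $a=1$ (two cover connections $(1,2,2)$ and $(1,2,4)$ with $\mu=3$ and $\mu=1$, so $a/2=1$ in each grading).
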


\begin{remark}[What the rank vector recovers, and what it does not]\label{rmk:ps}
The rank vector $(1+a,a,a,a)$ is the chain-rank distribution conjectured by
Poudel--Saveliev~\cite[\S7.4]{PS}. That conjecture was resolved for all torus knots by
Daemi--Scaduto~\cite[Cor.~3]{DScaduto}, whose theorem the proof below takes as an input;
see~\cite[\S9.5]{DSEquiv} for the equivalence of the two formulations. What is added here is the
branched-cover route to the distribution for this family, and the differential ranks that follow from
it. The comparison is moreover at the level of $\widetilde C$, which has the same homology as $C^\natural$~\cite[Thm.~8.9]{DSEquiv} though not the
same chain ranks.

The smallest case with $\widetilde\partial\neq0$ is $T(3,5)=P(-2,3,5)=10_{124}$, whose homology has
rank $7$ rather than the chain rank $9$. On the pillowcase side Smith proves independently that the
relevant bounding cochain is nonzero~\cite{Smith}. The two facts are consistent, and we do not
identify that cochain with $\widetilde\partial$.
\end{remark}

\begin{proof}
Let $C_*(K)$ denote the irreducible singular instanton complex and $I_*(K)=H_*(C_*(K))$ its homology.
Daemi--Scaduto prove that for every torus knot $K$
\[
I_*(K)\cong
\Z_{(1)}^{\lceil-\sigma(K)/4\rceil}\oplus
\Z_{(3)}^{\lfloor-\sigma(K)/4\rfloor};
\]
\cite[Cor.~3]{DScaduto}; the torus-knot complex has total rank $-\sigma(K)/2$ and is supported in
gradings $1$ and $3$~\cite[\S9.5]{DSEquiv}, so its differential vanishes. For the odd knots under consideration,
$-\sigma/4=a$, and hence
\[
C_*(T(3,n))\cong \Z_{(1)}^a\oplus\Z_{(3)}^a.
\]

The \emph{branched-cover correspondence} relates the two sides. Flat $\SO$ connections on the double
branched cover $\Sig(2,3,n)$ that are equivariant for the deck involution correspond to traceless
$\SU$ representations of the knot group, and the deck symmetry descends to an involution $\iota$ on
the knot side, the \emph{flip}. Over an irreducible cover connection $\beta$ the fiber is a free
$\iota$-orbit $\{\alpha,\iota\alpha\}$ of two irreducible knot classes~\cite[\S9.1]{DSEquiv}. (Two
disambiguations: this $\iota$ is not the pillowcase involution of Section~\ref{sec:pillow}, and
$\alpha,\beta$ denote connections and classes here, not the angles $\pi\ell_1/3$, $\pi\ell_2/n$ of
Section~\ref{sec:3n}.) In the
Poudel--Saveliev model the same fiber appears as a pair of circles of critical points, interchanged
by the flip \cite[Prop.~4.1(c), read as in \S6.2]{PS}; nondegeneracy holds for these torus-knot
classes~\cite[\S7.4]{PS}, and the two circles carry the same $\mu$-grading~\cite[Lem.~6.4 and the
paragraph following it]{PS}.

The two sides grade their generators against different reference points, so they must be aligned
before the counts can be compared. Daemi--Scaduto grade an irreducible $\alpha$ by the index of the
singular cylinder joining it to the reducible, normalized so that the reducible sits in degree zero
\cite[(2.18)]{DSEquiv}; Poudel--Saveliev grade the pair of circles over $\beta$ by an equivariant
index carrying an additive $\operatorname{sign}$ term \cite[Prop.~6.5]{PS}. The two normalizations
differ by exactly the degree of the comparison isomorphism, which is $\sigma(K)\bmod 4$
\cite[Thm.~8.9]{DSEquiv}. Hence the generator-level dictionary
\[
\mu(\beta)=\operatorname{gr}_{DS}(\alpha)+\sigma(T(3,n))\pmod4,
\]
where $\operatorname{gr}_{DS}$ is the Daemi--Scaduto grading on the knot complex, not the
Fintushel--Stern index $\gr$ of~\eqref{eq:FS}. Because $\sigma=-4a$, the shift vanishes modulo $4$. If $\nu_j$ denotes the number of cover
connections of degree $j$, the two
equal-degree lifts of each $\beta$ and the displayed ranks of $C_*$ give
\[
2\nu_1=a,\qquad 2\nu_3=a,\qquad \nu_0=\nu_2=0.
\]
Thus $a/2$ cover connections have two lifts of degree $1$, and $a/2$ have two lifts of degree $3$.
In particular $a$ is even; explicitly $a=(n-1)/3$ for $n\equiv1\pmod6$ and $a=(n+1)/3$ for
$n\equiv5\pmod6$.

The framed complex $\widetilde C_*=C_*\oplus C_{*-1}\oplus\Z_{(0)}$ of Section~\ref{sec:intro-results}
carries the differential of an $S$-complex \cite[Def.~3.21]{DSEquiv}: besides the differential $d$ of
$C_*$ it has a map $v\colon C_*\to C_{*-2}$ and two maps $\delta_1\colon C_1\to\Z$,
$\delta_2\colon\Z\to C_{-2}$ to and from the reducible summand. The $\mu=1$ cover connections
contribute two generators in each of degrees $1,2$, while the $\mu=3$ connections contribute two in
each of degrees $3,0$; the reducible $\Theta$ adds one in degree $\sigma\equiv0$. This directly gives
the rank vector $(1+a,a,a,a)$. Daemi--Scaduto give a graded chain-homotopy equivalence from the
reduced singular instanton complex to $\widetilde C$, and hence its homology computes
$\Inat$~\cite[Thm.~8.9]{DSEquiv}. The vanishing used above concerns only $d$; the maps
$v,\delta_1,\delta_2$ need not vanish, which is why $\widetilde\partial$ can be nonzero.

It remains to determine the homology rank, and Lemma~\ref{lem:homrank} supplies it. For
$n\equiv1\pmod6$ the common rank is $1+4a$, while for $n\equiv5\pmod6$ it is $4a-1$.
Finally, for any finite-dimensional complex,
$\dim H=\dim C-2\operatorname{rank}\partial$; applying this to $\widetilde C$, of chain
rank $1+4a$ gives total differential rank zero and one, respectively. Direct evaluation
of~\eqref{eq:FS}--\eqref{eq:rho} for
all odd $n\le43$ independently verifies the grading split. Anvari's calculation proves the
$n\equiv1\pmod6$ subfamily directly~\cite[Ex.~6.2]{Anvari}.
\end{proof}

Explicitly $a=(n-1)/3$ for $n\equiv1$ and $a=(n+1)/3$ for $n\equiv5\pmod6$; either way
$\operatorname{rank}\Inat\sim 2N(3,n)$ grows while $\det=1$ stays fixed, so the rank far exceeds the
determinant.
Here $\chi(\Inat(K))=+1$ for \emph{every} knot~\cite[Thm~8.1]{PS}, carried by the single trivial
generator, so $\chi$ coincides with $\det=1$ only by accident; in general $\chi\neq\det$, as the
case $8_{19}$ below ($\chi=1$, $\det=3$) shows.

\begin{table}[ht]
\centering
\begin{tabular}{c|cccccccc}
$n$ & $5$ & $7$ & $11$ & $13$ & $17$ & $19$ & $23$ & $25$\\
$n\bmod 6$ & $5$ & $1$ & $5$ & $1$ & $5$ & $1$ & $5$ & $1$\\ \hline
$N(3,n)=2a$ & $4$ & $4$ & $8$ & $8$ & $12$ & $12$ & $16$ & $16$\\
$a=-\sigma/4$ & $2$ & $2$ & $4$ & $4$ & $6$ & $6$ & $8$ & $8$\\
chain rank $1+4a$ & $9$ & $9$ & $17$ & $17$ & $25$ & $25$ & $33$ & $33$\\
$\operatorname{rank}\Inat$ & $\mathbf 7$ & $9$ & $\mathbf{15}$ & $17$ & $\mathbf{23}$ & $25$
 & $\mathbf{31}$ & $33$\\
\end{tabular}
\medskip
\caption{For $T(3,n)$, $n$ odd coprime to $3$: the traceless character count $N=2a$, the standard
chain rank $\dim\widetilde C=1+4a$ (Proposition~\ref{prop:split}), and the homology rank
$\operatorname{rank}\Inat=\|\Delta_{T(3,n)}\|_1$. The two coincide for $n\equiv1\pmod6$ (differential
zero) but differ by $2$ for $n\equiv5\pmod6$ (bold; $\operatorname{rank}\widetilde\partial=1$).
$T(3,5)=P(-2,3,5)=10_{124}$
is the smallest nonzero case; every displayed homology rank follows from
Lemma~\ref{lem:homrank}. The chain-group rank vector for every column is
$(1+a,a,a,a)$.}
\label{tab:3n}
\end{table}

\subsection{The even residue classes}

\begin{proposition}[Even residue classes]\label{prop:even}
Let $n$ be even and coprime to $3$, and write $N=N(3,n)$. The framed complex $\widetilde C$ has rank $2N+1$. Its rational homology rank and the total rank of its differential are
\[
\begin{array}{c|c|c|c}
n & N & \operatorname{rank}_{\mathbb Q}\Inat(T(3,n);\mathbb Q) &
\operatorname{rank}_{\mathbb Q}\partial\\ \hline
6m+2 & 4m+1 & 8m+3=2N+1 & 0\\
6m+4 & 4m+3 & 8m+5=2N-1 & 1.
\end{array}
\]
In particular, $\widetilde C$ has zero differential for every $n\equiv2\pmod6$ and a nonzero
differential of total rank one for every $n\equiv4\pmod6$.
\end{proposition}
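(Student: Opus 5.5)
The proof follows the template of Proposition~\ref{prop:split}, but replaces the grading input with a count. The plan is to compute the chain rank $\dim\widetilde C$ and the homology rank $\operatorname{rank}\Inat$ independently, and then read off the differential from $\dim H=\dim C-2\operatorname{rank}\partial$.

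\textbf{Chain rank.} I will first show that $\dim\widetilde C=2N+1$. For the torus knot $T(3,n)$ with $n$ even, the irreducible complex $C_*$ has one generator per irreducible traceless character. These characters are nondegenerate, as for all torus knots in~\cite[\S9.5]{DSEquiv}, and Daemi--Scaduto's complex has total rank $-\sigma/2$. Cross-checking against Proposition~\ref{prop:count} gives $N$ generators, so $\operatorname{rank}C_*=N$.

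The framed complex $\widetilde C=C_*\oplus C_{*-1}\oplus\Z_{(0)}$ then has rank $2N+1$. I will confirm the match $N=-\sigma(T(3,n))/2$ in both even classes, using the standard signature of $T(3,n)$:
\begin{itemize}
\item[] for $n=6m+2$: $\sigma=-(8m+2)$, so $N=4m+1$;
\item[] for $n=6m+4$: $\sigma=-(8m+6)$, so $N=4m+3$.
\end{itemize}
Both agree with the formulas $N=(2n-1)/3$ and $N=(2n+1)/3$ of Proposition~\ref{prop:count}. This is where I must be careful. Because $\Sig(2,3,n)$ is not a homology sphere, the branched-cover route to the count is unavailable. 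So the chain rank must be taken from the knot-side count directly, together with the Daemi--Scaduto statement that the torus-knot irreducible complex has one generator per character and vanishing $d$.

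\textbf{Homology rank.} Next I apply Lemma~\ref{lem:homrank}.
\begin{itemize}
\item[] For $n=6m+2=3(2m)+2$, take $k=2m$ in the $n=3k+2$ case. This gives rank $4k+3=8m+3=2N+1$.
\item[] For $n=6m+4=3(2m+1)+1$, take $k=2m+1$ in the $n=3k+1$ case. This gives rank $4k+1=8m+5=2N-1$.
\end{itemize}

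\textbf{Differential rank.} Applying $\dim H=\dim C-2\operatorname{rank}\widetilde\partial$ over $\mathbb Q$ gives $\operatorname{rank}\widetilde\partial=0$ in the first class and $\operatorname{rank}\widetilde\partial=1$ in the second. In the second class the nonzero part must come from $v$, $\delta_1$ or $\delta_2$, since $d=0$.

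\textbf{Main obstacle.} The step I expect to be hardest is the chain-rank step. Specifically, it must be justified that $\widetilde C$ is genuinely built from exactly the $N$ traceless characters, each contributing one nondegenerate generator, even though no index formula is available. The first thing I would try is to cite the Daemi--Scaduto torus-knot theorem on the knot side, which needs no cover. I would then use the signature identity above as a consistency check.
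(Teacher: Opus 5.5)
Your proposal is correct and is essentially the paper's proof. The chain rank $2N+1$ comes from Daemi--Scaduto's one-generator-per-character description of the torus-knot complex together with Proposition~\ref{prop:count}, the homology rank from Lemma~\ref{lem:homrank} with $k=2m$ and $k=2m+1$, and the differential rank from $\dim H=\dim C-2\operatorname{rank}\partial$; your signature check $N=-\sigma(T(3,n))/2$ is only an extra consistency test that the paper does not spell out here.
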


\begin{proof}
For torus knots, the irreducible complex has one generator for every irreducible traceless character
and total rank $N$~\cite[\S9.5]{DSEquiv}; the framed construction
$C_*\oplus C_{*-1}\oplus\Z_{(0)}$ therefore has rank $2N+1$. This is also the minimal complex produced
from the torus-knot representation variety by Hedden--Herald--Kirk~\cite[\S\S5.1 and 11]{HHK1}.
Proposition~\ref{prop:count} gives $N=4m+1$ for $n=6m+2$ and $N=4m+3$ for $n=6m+4$.
Lemma~\ref{lem:homrank} gives homology rank $8m+3$ and $8m+5$, respectively. The identity
$\dim H=\dim C-2\operatorname{rank}\partial$ applied to $\widetilde C$ gives the two asserted
differential ranks.
\end{proof}

For example, $T(3,8)$ has $N=5$, chain rank $11$, and homology rank $11$, so its
differential is zero, although $T(3,8)$ is not alternating. The vanishing is systematic rather than
accidental: it happens for every $n\equiv2\pmod6$, while the rank-one cases are exactly
$n\equiv4\pmod6$.

The Fintushel--Stern grading is special to $n$ odd, where $\Sig(2,3,n)$ is a homology sphere; for $n$
even $b_3\equiv(2p)^{-1}\bmod n$ does not exist. Proposition~\ref{prop:even} replaces that index
calculation by a rank argument: $\widetilde\partial$ vanishes for $n\equiv2\pmod6$ and is
nonzero for $n\equiv4\pmod6$. The smallest nonzero instance is
$8_{19}=T(3,4)$. In this case the corresponding pillowcase calculation is also known: its bounding
cochain vanishes and its differential is a single bigon. We now turn to that calculation.

\section{A nonzero differential: \texorpdfstring{$8_{19}=T(3,4)$}{8\_19 = T(3,4)}}\label{sec:819}

For even $n$ the double cover $\Sig(2,3,n)$ carries nontrivial reducibles ($H_1\neq0$), so maps involving
the reducible can occur in $\widetilde C$. Proposition~\ref{prop:even} shows that its
differential is nonzero precisely when $n\equiv4\pmod6$. We treat the smallest case,
$8_{19}=T(3,4)$, where the differential is also known geometrically on the pillowcase, in two ways. (Rank
vectors $(r_0,r_1,r_2,r_3)$ record ranks in gradings $0,1,2,3$, as in Section~\ref{sec:grading}.)

\subsection{Structural derivation}
By Theorem~\ref{thm:B} and Proposition~\ref{prop:count}, $T(3,4)$ has three irreducible traceless
characters, two non-dihedral (at $\psi\approx125.3^\circ,54.7^\circ$) and one dihedral (at $90^\circ$),
together with one abelian character. Via the double cover $\Sig(2,3,4)$, whose first homology is
$\Z/3$: the two non-dihedral characters are the two lifts of the single $\Sig$-irreducible; the dihedral
character is the nontrivial reducible; the abelian character is $\Theta$. Through the grading framework
(an irreducible contributes four generators, two in grading $\mu$ and two in grading $\mu+1$; the
nontrivial reducible two; $\Theta$ one, in grading $\sigma\bmod4=2$), with $\mu_{\mathrm{irr}}=3$ and
$\mu_{\mathrm{red}}=1$, the chain
complex assembles to
\[
\widetilde C(8_{19})=(2,1,2,2),\qquad \chi=+1,
\]
matching the instanton computation of Poudel--Saveliev~\cite[Ex.~7.9]{PS}. The rank equations for a
grading-respecting differential and homology $(2,1,1,1)$ force
$\operatorname{rank}\partial_3=1$ for $\partial_3\colon C_3\to C_2$, with every other graded component
of the differential zero. This argument determines the rank and degree of the map, but not its source
and target basis vectors; those are supplied by the geometric computation below.

\subsection{The explicit bigon}
Hedden--Herald--Kirk~\cite[\S11.6]{HHK2} realize exactly this. With $L_0$ the earring figure-eight and
$L_1$ the perturbed character variety of the torus-knot tangle with the parameters
$(3,4,3,-2)$ in the notation of \cite[\S11]{HHK2}, the singular
unperturbed variety is shaped like the letter $\varphi$, a circle with an arc through it, whence the
name ``$\varphi$-curve'' of \cite[\S11.6]{HHK2}. It resolves under perturbation into an arc
$R_0$ and a vertically monotone circle $R_1$. The seven generators are $r_+$ (corner, $\mu=2$), $x_1^\pm$ (arc), and $x_2^\pm,x_3^\pm$ (circle);
the gradings are $\mu(x_1^+)=0$, $\mu(x_1^-)=3$, and the circle contributes one generator in each
grading. There is exactly one bigon, $x_1^-\to r_+$, running into the corner along $R_0$ and the earring,
giving $\partial x_1^-=r_+$, a rank-one map from grading $3$ to grading $2$. The circle bounds no bigon.
The homology is
\[
H_*(\widetilde C(8_{19}))=(1,0,0,0)_{\text{arc}}\oplus(1,1,1,1)_{\text{circle}}=(2,1,1,1)\cong\Inat(8_{19}),
\]
of rank $5>\det=3$.

\subsection{Comparison of the two derivations}
The structural derivation and the explicit computation give the same graded complex and the same
differential. The bigon is a corner figure-eight bigon: the earring's self-crossing at a pillowcase
corner linking an interior irreducible character to the reducible locus. This is precisely the mechanism
absent on two-bridge knots (Section~\ref{sec:2bridge-pillow}): there the Lagrangians are straight, there
is no corner bigon, and $\partial=0$; here the single corner bigon is what makes $\Inat(8_{19})$
exceed $\det$. The geometric computation is due to~\cite{HHK2}; the preceding rank argument independently
predicts the differential's rank and degree, while the pillowcase bigon identifies its source and target.

\section{The open direction}\label{sec:open}

Everything above concerns generators and gradings; the analytic heart of the knot Atiyah--Floer
conjecture is the bounding cochain. On two-bridge knots it is inert (Section~\ref{sec:2bridge-pillow});
off them it is genuinely nonzero, as shown for the $(4,5)$-torus knot~\cite{CHKK} and the pretzel knot
$P(-2,3,5)$~\cite{Smith}. Here $P(-2,3,5)=T(3,5)=10_{124}$ is itself a member of our family, the
smallest $n\equiv5\pmod6$ case. Smith's nonzero pillowcase correction in that example is compatible
with the rank-one differential of Proposition~\ref{prop:split}, but no chain-level
identification is asserted here. The framed complex has a nonzero differential for
$n\equiv5$ and $n\equiv4\pmod6$, while it has zero differential for $n\equiv1$ and
$n\equiv2\pmod6$. Two geometric problems remain.

\emph{First, the instanton side.} For every torus knot the irreducible singular instanton homology is
$\Z_{(1)}^{\lceil-\sigma/4\rceil}\oplus\Z_{(3)}^{\lfloor-\sigma/4\rfloor}$ (subscripts denote the
grading, superscripts the rank: the rank vector is $(0,\lceil-\sigma/4\rceil,0,
\lfloor-\sigma/4\rfloor)$) with vanishing
differential~\cite{DScaduto}. For even $n$, however, $\Sig(2,3,n)$ carries a nontrivial reducible
($\det=3$, so $H_1=\Z/3$), and $\widetilde C$ has the shape
\[
\widetilde C(T(3,n))\ =\ \underbrace{\tfrac{N-1}{2}\ \text{irreducibles}}_{4\ \text{generators each}}
\ \oplus\ \underbrace{\text{one reducible}}_{2}\ \oplus\ \Theta,\qquad \dim=2N(3,n)+1,
\]
on which maps involving the reducible can appear, exactly the corner figure-eight bigon of $8_{19}$
(Section~\ref{sec:819}). Proposition~\ref{prop:even} settles the algebraic rank in this model: its differential
vanishes for $n\equiv2\pmod6$ and has total rank one for $n\equiv4\pmod6$. What remains open is to
identify the rank-one map geometrically throughout the latter family and to explain geometrically why
all candidate maps vanish in the former.

\emph{Second, and harder, the pillowcase side:} to compute the bounding-cochain-deformed pillowcase
invariant and match it to $\Inat$. A rigorous uniform computation for this torus-knot family is not
presently available; relevant individual calculations include the $(4,5)$-torus knot
(correct rank~\cite{CHKK}) and $P(-2,3,5)$ (nonzero only~\cite{Smith}). The
companion~\cite{WuebbenII} studies finite polygon
calculations for members of the pretzel family $P(-2,3,q)$ within the immersed-curve combinatorial
model of~\cite{HK,Smith}. Establishing the full Maurer--Cartan equation, including repeated and
higher insertions, and proving the analytic identification remain open. Both problems lie beyond the
present, purely representation-theoretic, account.

\appendix

\section{Changes from version 2}\label{app:changes}

This is version~3. It corrects, completes, and narrows version~2 of July 30, 2026, as follows.

\begin{enumerate}
\item \emph{A false statement about even $n$ is corrected.} Version~2 asserted that a nonzero
differential appears for every even $n$. It does not. Proposition~\ref{prop:even} proves that in the
framed complex $\widetilde C$ the differential vanishes when $n\equiv2\pmod6$ and has total rank one when
$n\equiv4\pmod6$; the smallest counterexample to the earlier assertion is $T(3,8)$.

\item \emph{The grading split is proved rather than evaluated.} Version~2 obtained
Proposition~\ref{prop:split} by evaluating \eqref{eq:FS}--\eqref{eq:rho}, using the assertion that
$\rho\equiv3$ on $\Sig(2,3,n)$, which is not generally valid. The proposition is now proved from
Daemi--Scaduto's torus-knot theorem~\cite{DScaduto} together with the two-lift dictionary for the
branched-cover correspondence~\cite{DSEquiv,PS}, and the index evaluation is retained only as an
independent check for all odd $n\le43$.

\item \emph{The homology ranks are obtained differently.} The reduced Khovanov ranks now come from
Sch\"utz's Bar--Natan chain decomposition for three-braids~\cite{Schutz}, in place of the earlier
appeal to Turner's spectral sequence and Shumakovitch's torsion theorem; the Alexander lower bound is
attributed to~\cite{KMAlex}; and the coefficient-norm computation is carried out for every $n$ coprime
to $3$, as Proposition~\ref{prop:even} requires.

\item \emph{Theorem~\ref{thm:B} has a complete proof.} Version~2 established only that a
binary-dihedral traceless character forces $n$ even. The converse construction, which produces that
character for every even $n$, is now given. The standing hypothesis $n\ge2$ is stated, and the
integrality of $\phi_p$ in the proof of Theorem~\ref{thm:A} is justified.

\item \emph{The chain-level object is named precisely.} Statements formerly made about ``the reduced
singular-instanton chain complex $IC^\natural$'' are now made about the framed complex
$C_*\oplus C_{*-1}\oplus\Z_{(0)}$, and every differential rank in this paper refers to that model.

\item \emph{Three claims are narrowed.} In Section~\ref{sec:819} the rank argument is now claimed to
determine only the rank and the degree of the differential; version~2 read its source and target off
that argument as well, which the rank equations do not supply. The assertion that Smith's nonzero
bounding cochain for $P(-2,3,5)$ \emph{is} the differential of $\widetilde C$ is withdrawn: the two
are compatible, and no chain-level identification is made here. References to the bounding cochains
computed in the companion paper~\cite{WuebbenII} are rescoped to match its own version~3, in which
those computations are restated as finite polygon calculations.
\end{enumerate}

\bigskip
\noindent{\sc Bernd J. Wuebben, New York, NY,} \texttt{wuebben@gmail.com}

\end{document}